\renewcommand{\PrelimText}{%
  \textnormal{\texttt{/\jobname},
    \qquad\today,\quad\thistime}}
\newtheorem{theorem}{Theorem}[section]
\newtheorem{lemma}[theorem]{Lemma}
\newtheorem{proposition}[theorem]{Proposition}
\newtheorem{corollary}[theorem]{Corollary}
\newtheorem{remark}[theorem]{Remark}
\numberwithin{equation}{section}
\newcommand{\meantmp}[2]{#1\langle{#2}#1\rangle}
\newcommand{\mean}[1]{\meantmp{}{#1}}
\newcommand{\linear}[2]{{\ell_{#2}(#1)}}
\newcommand{\RNn}{{\setR^{N \times n}}}
\newcommand{\dt}{\ensuremath{\,{\rm d} t}}
\newcommand{\dtau}{\ensuremath{\,{\rm d} \tau}}
\newcommand{\dx}{\ensuremath{\,{\rm d} x}}
\newcommand{\dy}{\ensuremath{\,{\rm d} y}}
\newcommand{\dz}{\ensuremath{\,{\rm d} z}}
\newcommand{\osc}[2]{\ensuremath{\text{osc}_{{#2}}(#1)}}
\newcommand{\Qrho}[1]{{Q^{\lambda_{#1\rho}}_{#1\rho}}}
\newcommand{\Qr}[1]{{Q^{\lambda_{#1 r}}_{#1 r}}}
\newcommand{\normg}{\norm{g}_{L^\infty(I,\setBMO(B_r))}}
\newcommand{\normgw}[1]{\norm{g}_{L^\infty(I,\setBMO_{\omega'}(B_{#1}))}}
\newcommand{\setBLO}{\text{BLO}}
\begin{document}
\begin{frontmatter}
 
\title{H\"older-Zygmund Estimates for Degenerate Parabolic Systems}

\author{Sebastian Schwarzacher}
\ead{schwarz@math.lmu.de}

\address{LMU Munich, Institute of Mathematics, Theresienstr. 39,
  80333-Munich, Germany}

\begin{abstract}
 We consider energy solutions of the inhomogeneous parabolic $p$-Laplacien system
$\partial_t u-\divergence (\abs{\nabla u}^{p-2}\nabla u)=-\divergence g)$. We show in the case $p\geq 2$ that if the right hand side $g$ is locally in $L^\infty(\setBMO)$, then $u$ is locally in $L^\infty(\mathcal{C}^1)$, where $\mathcal{C}^1$ is the 1-H\"older--Zygmund space. This is the borderline case of the \Calderon-Zygmund theorey. 
We provide local quantitative estimates. We also show that finer properties of $g$ are conserved by $\nabla u$, e.g.\ H\"older continuity. 
 Moreover, we prove a new decay for gradients of $p$-caloric solutions for all $\frac{2n}{n+2}<p<\infty$.  
\end{abstract}
%
%
%

\begin{keyword}
  Degenerate Parabolic Systems, Regularity, Gardient estimates,  
\end{keyword}

\begin{keyword}

  \MSC  35B65\sep 35J45\sep 35K40\sep 35K65 \sep 35K92
\end{keyword}
\end{frontmatter}
\section{Introduction}
\noindent
We study local behavior of solutions $u:Q_T\to\setR^N$ to the inhomogeneous parabolic p-Laplace system.
\begin{align}
 \label{eq}
\partial_t u-\Delta_p u=\partial_tu-\divergence(\abs{\nabla u}^{p-2}\nabla u)=-\divergence g.
\end{align}
If $g\in L^{p'}(Q_T)$ this problem is well-posed and local solutions exist; here $Q_T$ is a space time cylinder. Solutions with this type of term on the right hand side are called energy solutions.
 It is the aim of the non-linear \Calderon-Zygmund theory to transfer information from $g$ to  $\nabla u$, the gradient of the solution. The theory started with the important paper of Iwaniec~\cite{Iwa83}. In this article the elliptic p-Laplace is considered. It states that if $g\in L^{p'q}$ for $1\leq q<\infty$, then $\nabla u\in L^{pq}$. In \cite{AceMin07} the same was proved for the parabolic p-Laplace \eqref{eq} including local estimates.
 On the other hand Misawa~\cite{Mis02} proved that if $g$ is H\"older continuous, then $\nabla u$ is H\"older continuous for conveniently small H\"older exponents. Later this result was refined and extended by Kuusi and Mingione~\cite{KuuMin12} (see also \cite{Mis13}). It is the concern of this article to close the gap between higher integrability and H\"older continuity, especially the limit case $q=\infty$. Even in the linear elliptic setting (i.e.\ Poisson's equation) we know that $g\in L^\infty$ does not imply $\nabla u\in L^\infty$. As in this case where $f\mapsto \nabla u$ is a singular integral operator, the right limit space is the space of bounded mean oscillation ($\setBMO$). In the case of the (non-linear) elliptic p-Laplacian the right limit space is the same. Indeed, it was shown in \cite{DiBMan93} and \cite{DieKapSch11} that $g\in \setBMO$ implies  $\abs{\nabla u}^{p-2}\nabla u\in\setBMO$ (locally).  The task to find a satisfactory limit space in the parabolic setting turns out to be difficult. We will introduce this matter by looking at the inhomogeneous heat equation. For the linear theory we have the natural space of parabolic bounded mean oscillation.
 We say that $f\in\setBMO_{\text{par}}(\Omega)$, if $f\in L^1(\Omega)$ and
\[
 \norm{f}_{\setBMO_{\text{par}}(\Omega)}:=\sup\limits_{Q_{r^2,r}\subset\Omega}\dashint_{Q_{r^2,r}}\abs{f-\mean{f}_{Q_{r^2,r}}}\dz<\infty.
\]
If $p=2$, then we find that $g\in \setBMO_{\text{par}}(Q_T)$ implies $\nabla u\in \setBMO_{\text{par}}(Q_T)$.

The non-linear version of this result is the boundedness over mean oscillation of the so called natural scaled cylinders:
$Q_{\lambda^{2-p}r^2,r}=:Q^\lambda_r$, where 
\begin{align}
\label{eq:subintr}
 \lambda^p\geq \dashint_{Q_r^\lambda}\abs{\nabla u}^p\dz.
\end{align}
We carefully construct cubes of the above type and are able to bound the mean oscillations of $\nabla u$ over these natural scaled cylinders for $p\geq 2$: see Proposition~\ref{pro:intrbmo}. However, these oscillation estimates are not very  satisfactory. They depend very strongly on the solution itself. We will overcome this by proving some Bochner estimates. To motivate this result, we want to mention a result on which we worked simultaneously to this paper. There we prove $\abs{g}^{p'}\in L^\infty(I,L^q(B))$ implies $\abs{\nabla u}^p\in L^\infty(I,L^q(B))$ (locally). 
 If one let $q\to\infty$ on this quantity we realize that the right borderline space should be a Bochner space of type $L^\infty(I,X)$. The first guess is of course $X=\setBMO(B)$. It turns out that this space is too small. Instead we obtained the following main theorem.
\begin{theorem}
 \label{thm:m}
Let $u$ be a solution on $I\times B$, for $p\geq 2$. If $g\in L^\infty(I,\setBMO(B))$, then $u\in L^\infty_\loc(I,\mathcal{C}^1_\loc(B))$. Moreover, for every parabolic cylinder $Q_{2r}\subset I\times B$ 
\[
 \norm{u}_{L^\infty(I_{r^2},\mathcal{C}^1(B_r))}\leq c\norm{g}_{L^\infty(I,\setBMO(B))}^\frac1{p-1}+c\norm{\nabla u}_{L^p(Q_{2r})}+c,
\]
where the constant $c$ only depends on $n,N,p$.
\end{theorem}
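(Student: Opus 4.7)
My plan is to realize the $\mathcal{C}^1$-regularity in each spatial slice through a Campanato--Zygmund characterization of $\nabla u$: for each time $t \in I_{r^2}$ and each spatial ball $B_\rho(x_0) \subset B_r$, I aim to bound the purely spatial mean oscillation
\[
 \dashint_{B_\rho(x_0)} \abs{\nabla u(\cdot,t) - \mean{\nabla u(\cdot,t)}_{B_\rho(x_0)}}\dy
\]
by a $\rho$-independent constant involving $\norm{g}_{L^\infty(I,\setBMO(B))}^{1/(p-1)}$ and $\norm{\nabla u}_{L^p(Q_{2r})}$. Telescoping such a BMO-type bound across dyadic spatial scales produces affine approximants whose gradients drift only logarithmically, which is precisely the Zygmund seminorm of $u(\cdot,t)$; taking the supremum over $t$ then yields the $L^\infty(I_{r^2}, \mathcal{C}^1(B_r))$ estimate.

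First, for each space--time point $(x_0,t_0) \in B_r \times I_{r^2}$ and each $\rho \in (0,r]$ I would carry out the stopping-time selection of the intrinsic parameter $\lambda_\rho$ with $\lambda_\rho^p \simeq \dashint_{Q^{\lambda_\rho}_\rho} \abs{\nabla u}^p\dz$ underlying Proposition~\ref{pro:intrbmo}, and then apply that proposition to obtain
\[
 \dashint_{Q^{\lambda_\rho}_\rho} \abs{\nabla u - \mean{\nabla u}_{Q^{\lambda_\rho}_\rho}}\dz \lesssim \norm{g}_{L^\infty(I,\setBMO(B))}^{1/(p-1)}
\]
up to harmless lower-order terms. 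This is a scale-invariant bound on intrinsic parabolic cylinders; a dyadic telescoping in $\rho$ gives a \emph{parabolic} Zygmund-type estimate for the parabolic means of $\nabla u$.

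Second, to transfer this parabolic control onto fixed spatial slices, I would invoke the Bochner-type bound $\abs{\nabla u}^p \in L^\infty(I, L^q(B_r))$ referenced just before the theorem. Since $g \in L^\infty(I, \setBMO(B))$ implies $g \in L^\infty(I, L^q_\loc)$ for every finite $q$ via John--Nirenberg, the Bochner estimate gives $\nabla u \in L^\infty(I, L^q_\loc)$ with quantitative control. For $q$ large this dominates $\lambda_\rho$ uniformly in $t$ so that the intrinsic and standard parabolic scales coincide up to a controlled factor, and it lets a Lebesgue-point argument in $t$ replace parabolic averages over $Q^{\lambda_\rho}_\rho$ by spatial averages over $B_\rho(x_0) \times \{t\}$ at an admissible cost, giving the desired spatial BMO estimate on every slice.

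The hard part will be running the stopping-time construction uniformly in $(x_0,t_0)$ and $\rho$: since $\lambda_\rho$ depends on $\nabla u$ itself, the natural cylinders do not nest canonically, and I expect to split into a non-degenerate regime, where the Bochner bound forces $\lambda_\rho$ to stay bounded from above and the intrinsic-to-standard comparison is a finite factor absorbed into the constant, and a degenerate regime, where $\lambda_\rho$ is forced to be small so $\nabla u$ is already small on a standard cylinder and a direct Caccioppoli/Lipschitz argument closes the estimate. A secondary nuisance is that the intrinsic cylinders must lie inside $I \times B$, which is precisely why the theorem is formulated with $Q_{2r} \subset I \times B$ rather than $Q_r$.
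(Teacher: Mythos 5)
Your proposal hinges on first establishing a slice-wise spatial BMO estimate for $\nabla u$, i.e.\ uniform (in $t$) control of
\[
 \dashint_{B_\rho(x_0)} \abs{\nabla u(\cdot,t) - \mean{\nabla u(\cdot,t)}_{B_\rho(x_0)}}\dy,
\]
which is precisely the statement that $u \in L^\infty_\loc(I, W^{1,\setBMO}_\loc(B))$. But the paper explicitly flags this as the \emph{wrong} target: in the introduction, after observing that the borderline space should be of Bochner type $L^\infty(I,X)$, it says ``The first guess is of course $X=\setBMO(B)$. It turns out that this space is too small.'' Indeed, as the paper records, $W^{1,\setBMO}(B)\subsetneq \mathcal{C}^1(B)$, so proving slice-wise $\setBMO$ of $\nabla u$ would prove something strictly stronger than the theorem, and the authors could not obtain it. Your step 1 is therefore an unjustified leap, and in fact the core difficulty of the theorem is precisely that one must work with the weaker $\setBLO$ (linear oscillation) quantity on $u$ rather than $\setBMO$ of $\nabla u$.

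A second, independent problem is that your step 2 invokes the Bochner bound $\abs{\nabla u}^p \in L^\infty(I,L^q_\loc(B))$ to ``let a Lebesgue-point argument in $t$ replace parabolic averages by spatial averages.'' That estimate is \emph{not} proved in this paper; it is only mentioned in the introduction as a result from a simultaneous, separate work, offered there purely as motivation for why the borderline space should be of Bochner type. You cannot use it as an ingredient here.

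What the paper actually does is quite different in structure. Theorem~\ref{thm:m} is a direct corollary (setting $\omega \equiv 1$) of the more general Theorem~\ref{thm:main}, whose object of estimation is the spatial \emph{linear} oscillation $N_r^\omega(u)(t,x) = \omega^{-2}(r)\dashint_{B_r}\absb{(u(\cdot,t)-\linear{u}{r}(t))/r}^2\dy$, the Campanato seminorm for $\setBLO = \mathcal{C}^1$. There one never asks for $\setBMO$ control of $\nabla u$ on slices; instead one compares to the $p$-caloric function $h$ of \eqref{eq:hom}, splits $u-\linear{h}{\delta r}$ into $(u-h)$ plus $(h-\linear{h}{\delta r})$, and controls the first piece by the \emph{pointwise-in-time} energy term $\lambda_r^{p-2}\dashint_{B_r}\absb{(u-h)(t)}^2/r^2\,\dy$ already present in Lemma~\ref{eq:comparison}, and the second piece by Poincar\'e together with $\sup\abs{\nabla h}$ or $\osc{\nabla h}{\cdot}$ from Theorem~\ref{thm:sup} and Proposition~\ref{pro:comp}. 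The intrinsic-cylinder stopping-time machinery (Lemma~\ref{lem:scal}, Lemma~\ref{lem:cubes}, Proposition~\ref{pro:intrbmo}) that you correctly anticipate is indeed used — but to run the BLO estimate, not to transfer a parabolic $\setBMO$ bound for $\nabla u$ onto fixed slices. You should redo the argument with the $\setBLO$ seminorm of $u$ as the primary quantity and avoid the slice-wise $\setBMO$ claim for $\nabla u$ and the external Bochner result.
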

Here $\mathcal{C}^1$ is the 1-H\"older-Zygmund space (see \cite{Zyg45} and Section~\ref{sec:pre} for the exact definition). It is a known substitute for $C^1$ in the setting of PDE's. To fortify this we mention the following order of spaces on a bounded set $B\subset\setR^n$
\[
 C^1(B)\subset W^{1,\setBMO}(B)\subset \mathcal{C}^1(B)\subset \bigcap_{1\leq q<\infty}W^{1,q}(B).
\]
All estimates can be found in Triebel's book \cite{Tri92}.
 The difference between these spaces and details will be discussed in Section~\ref{sec:pre} and interpolation estimates, that follow from our estimates can be found in Remark~\ref{rem:triebel}.

Theorem~\ref{thm:m} is the limit case which has not been proven before. To the authors knowledge these estimates are new even for the linear case $p=2$. Our estimates are general enough so that we can go beyond. Indeed, all our estimates can be stated in the form of weighted $\setBMO_{\omega}$ (see Section~\ref{sec:pre} for details). These imply, for example, that H\"older continuity can be transferred from $g$ to $\nabla u$ (see Proposition~\ref{cor:hoelder}). This was already proven for all $\frac{2n}{n+2}\leq p$ in \cite{Mis02} and more recently in \cite{KuuMin12} and \cite{Mis13}. However, for the model case  \eqref{eq} and $p\geq2$ considered here, all such estimates are regained by our technique. Moreover, we can weaken the condition on $g$. Indeed, if $g\in L^\infty(I,C^{\gamma(p-1)}(B))$, this already implies that $\nabla u\in C^\gamma_{\text{par}}(I\times B)$ locally for small $\gamma$; see Proposition~\ref{cor:hoelder} at the end of the paper. 

The sub-quadratic case requires more difficult analysis. This can be seen in the elliptic case, where the sub-quadratic case was much more problematic to treat (see \cite{DieKapSch11} for details on that matter). Also in the parabolic case it is not a straightforward extension, but needs other sophisticated tools. We hope to present these in a future work.
Some advances for the $\frac{2n}{n+2}<p<2$ are achieved in this paper. The first important step to gain $\setBMO$ estimates is a decay estimate for homogeneous solutions (called $p$-caloric). In Theorem~\ref{thm:decay} we prove a decay in the spirit of Giaquinta and Modica~\cite{GiaM86} for $p$-caloric solutions. This decay is a distinctively stronger estimate on the H\"older behavior for the gradients of $p$-caloric solutions than known before. It tightens the famous result of DiBenedetto and Friedman \cite{DibFri85} and is therefore of independent interest.

Let us mention some results if the right hand side of \eqref{eq} can be characterized by Radon measures. In case of systems little is known. In the case where $u$ is scalar valued, Kuusi and Mingione provided pointwise estimates, which allow a direct control of $\nabla u$ by the right hand side, such that many regularity properties can be carried over. See \cite{KuuMin122},\cite{KuuMin13}.

Finally we want to give another motivation. In \cite{DieKapSch13} it was possible to extend the techniques of \cite{DieKapSch11} to stationary power law fluids. We hope to gain some generalizations of the estimates given in this article to instationary power law fluids in the future. 

	
 The structure of the paper is as follows: first we prove the decay for $p$-caloric solutions (for all $\frac{2n}{n+2}<p<\infty$). This is done in Section~\ref{sec:decay}. In Section~\ref{sec:bmo} we derive a comparison estimate on so-called intrinsic cylinders (see Lemma~\ref{lem:cubes}). This leads to the boundedness of the intrinsic mean oscillations, which implies the H\"older-Zygmund estimate.

\section{Preliminaries}
\label{sec:pre}
\noindent
Through the paper we will denote by $I$ a (time) interval and $B$ to be a ball (in space). By $I_r,B_r$ we mean a time interval or ball in space with radius $r$. A time space cylinder with ``center point'' $(t,x)$ 
 $Q_{s,r}(t,x):=Q_{s,r}(t,x):=(t,t-s)\times B_r(x)$ and its parabolic boundary as $\partial_{\text{par}}Q_{s,r}(t,x):=[t,t-s]\times \partial B_r(x)\cup (t-s)\times B_r(x)$. As the ``center point`` is mostly of no importance, it will be often omitted.
 We will use the notation
$\mean{f}_E:=\dashint_E f\dx=\frac1{\abs{E}}\int_E f \dx$.  

We have to introduce a few function spaces. Let $\omega:\setR_+\to\setR_+$ almost increasing. This means, that there is a $c>0$ fixed, such that $\omega(r)\leq c\omega(\rho)$ for all $r<\rho$. We say that $f\in\setBMO^{\text{par}}_\omega(Q)$ the weighted space of mean oscillations, if
\[
 \norm{f}_{\setBMO^{\text{par}}_{\omega}(Q)}=\sup\limits_{Q_{r^2,r}\subset Q}
\frac1{\omega(r)}\dashint_{Q_{r^2,r}}\abs{f-\mean{f}_{Q_{r^2,r}}}\dx\dt<\infty.
\]
 For $\omega(r)=1$, we get the space of parabolic bounded mean oscillation: $\setBMO_{\text{par}}(Q)$. By the Campanato characterization, of H\"older spaces we find for $\beta\in (0,1)$ and $\omega(r)=r^\beta$ the space of H\"older continuous function in the parabolic metric.

We will now look at the Bochner spaces of refined BMO.
Let $\omega:\setR_+^2\to\setR_+$. We say that $f\in \setBMO_{\omega}(I\times B)$ if
\begin{align*}
 \norm{f}_{\setBMO_\omega(Q)}:=\sup\limits_{I_s\times B_r\subset Q}\frac1{\omega(s,r)}\dashint_{I_s}\dashint_{B_r}\abs{f-\mean{f(t)}_{B_r}}\dx\dt<\infty.
\end{align*}
 if $\omega\equiv1$, then we have the space $L^\infty(I,\setBMO(B))$. More general, if $\omega$ only depends on $r$, then we have the $L^\infty(I,\setBMO_\omega(B))$ spaces.
 
Through the paper we will need the following typical estimate for mean oscillations, which we will refer to as \textit{best constant property}. For $f\in L^p(Q)$, $p\in[1,\infty)$ we have that
\[
 \dashint_Q\abs{f-\mean{f}_{Q}}^p\dx\leq 2^p\dashint\abs{f-c}^p\dx\text{ for all }c\in\setR.
\]
We will also need the famous \textit{John-Nierenberg estimate}~\cite{JohNir61} 
\begin{align*}
    \dashint_B \abs{f- \mean{f}_B}^q \,dx &\leq c_q\,
    \norm{f}_{\setBMO(B)}^q
  \end{align*}
for $1\leq q<\infty$.
Let us introduce the H\"older--Zygmund spaces. We say that $f\in \mathcal{C}^\gamma(\Omega)$ if 
\[
\norm{f}_{\mathcal{C}^\gamma
(\Omega)}:=\sup\limits_{x\in\Omega}\sup\limits_{[x,x+2h]\subset\Omega}\frac{\abs{f(x+2h)-2f(x+h)+f(x)}}{\abs{h}^\gamma}+\norm{f}_\infty<\infty.
\]
 This is a Banach space. By \cite[Sec. 1.2.2]{Tri92} we find that $C^\gamma(\Omega)=\mathcal{C}^\gamma(\Omega)$ if $\gamma\not\in\setN$ but $C^1(\Omega)\subsetneq\mathcal{C}^1(\Omega)$.  

We find in \cite[Section 1.7.2]{Tri92}, that $\mathcal{C}^1$ has a Campanato space like interpretation. 
Analogous to the spaces of $\setBMO_\omega$ we define the space of weighted bounded linear oscillation $\text{BLO}_{\omega}$ by the semi-norm
\[
 \norm{f}_{\setBLO^q_{\omega}(\Omega)}:=\sup_{B_r\subset \Omega} \inf_{\ell\in P^1(B_r)}\frac1{\omega{(r)}}\bigg(\dashint_{B_r}\Bigabs{\frac{f-\ell}{r}}^q\dx\bigg)^\frac1q,\, 1<q<\infty.
\]
Here $P^1$ is the set of all polynomials with degree 1. For $q=2$ we define $\linear{f}{r}$ as the best linear approximation of $f$ on $B_r$ in with respect to $\norm{\cdot}_2$, which is well defined for all $r>0$ and $f\in L^2_{\text{loc}}$. We find by \cite[Section: 1.7.2]{Tri92} that $\setBLO(\Omega):=\setBLO^1_1(\Omega)\equiv\setBLO^q_1(\Omega)\equiv\mathcal{C}^1(\Omega)$ for all $1\leq q<\infty$; more general, for $\gamma\in(0,1)$ and $\omega(r)=r^\gamma$ the space $\setBLO_{\omega}^q(\Omega)=\mathcal{C}^{1+\gamma}(\Omega)$ for $1\leq q<\infty$. We define that $f$ is in the space of vanishing linear oscillations VLO if $\norm{f}_{\setBLO(B_r(x))}\to 0$ for $r\to 0$ uniform in $x$. 
Please note
\[
 \frac1{\omega(r)}\norm{f}_{\setBMO^q(B_r)}\leq c\norm{f}_{\setBMO_\omega^q(B_r)}\text{ or }\frac1{\omega(r)}\norm{f}_{\text{BLO}^q(B_r)}\leq \norm{f}_{\text{BLO}^q_\omega(B_r)},
\]
because $\omega$ is almost increasing. We will use this in this work without further reference.  

We denote by
\[
 \osc{f}{E}:=\sup\limits_{x,y\in E}\abs{f(x)-f(y)}
\]
the oscillations of $f$ on $E$.

We define the following natural quantity: for $Q\in \RNn$ we have $V(Q):=\abs{Q}^\frac{p-2}2Q$. If $\nabla u\in L^p$, then $V(\nabla u)\in L^2$, therefore $V(\nabla u)$ can be seen as a linear substitute. 
First remark that we will use without further mentioning that for any set $E\subset R^n$ and $f,h\in L^p(E,\RNn)$
\[
 \mean{\abs{f}^p}_E\leq c\dashint_E\abs{V(f)-V(h)}^2\dx+ \mean{\abs{h}^p}_E.
\]
We will need \cite[Lemma 3]{DieE08}. It quantifies the ellipticity of \eqref{eq} in terms of $V$. In our case it states for $P,Q\in \RNn$ and $1<p<\infty$
\begin{align}
 \label{eq:hammer}
\begin{aligned}
(\abs{Q}^{p-2}Q-\abs{P}^{p-2}P)\cdot(Q-P)\sim \abs{V(Q)-V(P)}^2\\
\abs{\abs{Q}^{p-2}Q-\abs{P}^{p-2}P}\sim (\abs{Q}+\abs{Q-P})^{p-2}\abs{P-Q}^2.
\end{aligned}
\end{align}
This implies for $p\geq 2$
\begin{align}
 \label{eq:nervig}
\abs{P-Q}^p\leq c\abs{V(Q)-V(P)}^2.
\end{align}
We also need some estimate which makes use of so called shifted N--functions \cite[Lemma 32]{DieE08} and \cite[(2.5)]{DieKapSch11} we gain for $P,Q,G_1,G_0\in \RNn$ and $\delta>0$
\begin{align}
 \label{eq:hammer2}
\begin{aligned}
&\abs{G_1-G_0}\abs{P-Q}\\
&\quad\leq c(\abs{Q}+\abs{G_1-G_0})^{p'-2}\abs{G_1-G_0}^2+\delta\abs{V(Q)-V(P)}^2.
\end{aligned}
\end{align}
 Here $c$ only depends on $p,n,N$ and $\delta$. We use $p':=\frac p{p-1}$ as the dual exponent to $p$.

 Finally we introduce the $\lambda$--scaled cylinders $Q^\lambda_r(t,x):=(t,t-\lambda^{p-2}r^2)\times B_r(x)$, where $p$ is the exponent of \eqref{eq}.
 For $\theta\in \setR^+$ we define 
$\theta Q_r^\lambda(t,x):=(t,t-\lambda^{2-p}(\theta r)^2)\times B_{\theta r}(x)$. If $\lambda=1$, then we have a standard parabolic cylinder and we write $Q^1_r(t,x)=:Q_r(t,x)$. As solutions are translation invariant and our estimates are local, the center $(t,x)$ of the cube is mostly of no importance and will often be omitted, to shorten notation.  
Finally, we call a cylinder $K$-intrinsic with respect to $f$, when
\begin{align}
 \label{eq:Kint}
\begin{aligned}
\frac\lambda K \leq \mean{\abs{Df}^p}^\frac1p_{Q^\lambda_r}&\leq K\lambda
\text{ and $K$-sub-intrinsic w.r.t $f$, when}\\
\mean{\abs{Df}^p}^\frac1p_{Q^\lambda_r}&\leq K\lambda.
\end{aligned}
\end{align}
We say (sub-)intrinsic if $K=1$.

\section{Decay for p-Caloric Functions}
\label{sec:decay}
\noindent
In this section we consider $h:Q_T\to \setR^N$ to be locally $p$-caloric on a space time domain $Q_T$. I.e. $h$ is a solution to the following system
\[
 \partial_t h-\divergence(\abs{\nabla h}^{p-2}\nabla h)=0
\]
locally in $Q_T$.
In this section we provide a decay for the natural quantity $V(\nabla h)=\abs{\nabla h}^\frac{p-2}{2}\nabla h$. It is an extension to the known result of DiBenedetto and Friedmann \cite{DibFri85} providing finer estimates for the continuity behavior. 
Our results are very much in the spirit of Giaquinta and Modica \cite[Proposition 3.1-3.3]{GiaM86}. We will prove a parabolic version of their decay for the p-caloric setting.

The first theorem we will need is the well-known weak Harnack inequality first proved by DiBenedetto and Friedmann~\cite{DibFri85}, see also \cite[VIII]{DiB93}. We will use the K-sub-intrinsic version
 of \cite[Lemma 1+2]{AceMin07}.
\begin{theorem}
\label{thm:sup}
 Let $p>\frac{2n}{n+2}$ and $h$ be p-caloric on $Q_T$. If for $Q_r^\lambda\subset Q_T$
\[
 \dashint_{Q_r^\lambda}\abs{\nabla h}^p\dz 
\leq K\lambda^p,
\]
then
\[
 \sup_{\frac12Q_r^\lambda}\abs{\nabla h}\leq c\lambda.
\]
The constant only depends on $K,p$ and the dimensions.
\end{theorem}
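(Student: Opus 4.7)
The plan is to combine intrinsic rescaling with a Moser/De Giorgi iteration on the differentiated system. First I would normalize: set
\[
\tilde h(s,y) := (\lambda r)^{-1}\, h\bigl(t_0 + \lambda^{2-p}r^2 s,\, x_0 + r y\bigr)
\]
on the unit cylinder $Q_1=(-1,0)\times B_1$. A direct chain rule shows $\tilde h$ is again $p$-caloric and that $\mean{\abs{\nabla \tilde h}^p}_{Q_1}\leq K$, while the conclusion becomes $\sup_{Q_{1/2}}\abs{\nabla \tilde h}\leq c$. So it suffices to treat the dimensionless case: any $p$-caloric $h$ on $Q_1$ with $\mean{\abs{\nabla h}^p}_{Q_1}\leq K$ satisfies $\sup_{Q_{1/2}}\abs{\nabla h}\leq c(K,p,n,N)$.

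Next I would derive a Caccioppoli inequality for the gradient. Formally differentiating \eqref{eq} in $x_j$ gives, for each $j$, a linear parabolic system for $\partial_j h$ with coefficients of the form $\abs{\nabla h}^{p-2}\bigl(\delta_{ik}\delta_{\alpha\beta}+(p-2)\partial_i h^\alpha \partial_k h^\beta/\abs{\nabla h}^2\bigr)$. Testing with $\eta^2\,\partial_j h\,(\abs{\nabla h}^2-k^2)_+^{s}$ for $s\geq 0$ and a smooth cutoff $\eta$, summing over $j$, and using \eqref{eq:hammer}, one arrives after absorption at an energy inequality of the shape
\begin{align*}
\sup_t\int_B \eta^2 (\abs{\nabla h}^2-k^2)_+^{s+1}\dx &+ \int_Q \abs{\nabla h}^{p-2}\abs{\nabla^2 h}^2 \eta^2 (\abs{\nabla h}^2-k^2)_+^{s}\dz \\
&\leq c\int_Q\bigl(\abs{\partial_t \eta^2}+\abs{\nabla\eta}^2\abs{\nabla h}^{p-2}\bigr)(\abs{\nabla h}^2-k^2)_+^{s+1}\dz.
\end{align*}

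Combining this with parabolic Sobolev embedding applied slicewise gives the classical setup for a De Giorgi iteration on the super-level sets of $\abs{\nabla h}^2$. Choosing levels $k_j := M(1-2^{-j})$ and radii $r_j := \tfrac12+2^{-j-1}$, the energy estimate plus Sobolev produce a geometric recursion $A_{j+1}\leq C^j A_j^{1+\kappa}$ for the quantities $A_j := \int_{Q_{r_j}}(\abs{\nabla h}^2-k_j^2)_+^{q}\dz$, which forces $A_j\to 0$ provided $M$ is chosen large enough in terms of $\mean{\abs{\nabla h}^p}_{Q_1}$ and the iteration constants. This yields $\abs{\nabla h}\leq M$ on $Q_{1/2}$ and closes the argument.

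The central obstacle is the $\abs{\nabla h}^{p-2}$ degeneracy at points where $\nabla h$ is small (resp.\ the singularity when $p<2$); this is exactly what the intrinsic rescaling is designed to absorb. After normalization the average of $\abs{\nabla h}^p$ is of order one, so on the super-level sets $\{\abs{\nabla h}>k\}$ that drive the iteration the weight $\abs{\nabla h}^{p-2}$ is comparable to a constant, and the system is effectively non-degenerate in the region where the iteration operates. The threshold $p>\tfrac{2n}{n+2}$ enters through the parabolic Sobolev step used to pass from the energy inequality to the geometric recursion, which is the delicate point in the singular range.
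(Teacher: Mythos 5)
The paper does not reprove this result: it simply observes that the hypothesis and conclusion coincide with those of Lemma~1 ($p\geq2$) and Lemma~2 ($\tfrac{2n}{n+2}<p<2$) of Acerbi--Mingione (which themselves rest on DiBenedetto--Friedman), after the change of variables $r=\lambda^{\frac{p-2}{2}}s$, i.e.\ $s^2=\lambda^{2-p}r^2$, that rewrites the cylinder $Q_{s^2,\lambda^{(p-2)/2}s}$ as the intrinsic cylinder $Q_r^\lambda$. Your proposal instead sketches a self-contained proof: intrinsic rescaling to the unit cylinder, a Caccioppoli inequality on super-level sets for the differentiated system, and a De Giorgi iteration. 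That is indeed the strategy behind the cited results (it is essentially DiBenedetto's Chapter~VIII), so the route is different from the paper but correct in outline; the paper simply chose to cite rather than reproduce a long standard argument.

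One claim in your sketch is imprecise in a way that matters once you try to fill in the details: on the super-level sets $\{\abs{\nabla h}>k\}$ the weight $\abs{\nabla h}^{p-2}$ is \emph{not} comparable to a constant. For $p>2$ you get a lower bound $\abs{\nabla h}^{p-2}\geq k^{p-2}$ (good for coercivity) but no a priori upper bound -- an upper bound is exactly what you are trying to prove -- so the right-hand side of your energy inequality, which carries $\abs{\nabla\eta}^2\abs{\nabla h}^{p-2}(\abs{\nabla h}^2-k^2)_+^{s+1}$, cannot be bounded by a constant times the truncated energy; the iteration only closes because the extra powers of $\abs{\nabla h}$ can be reabsorbed into the $A_j$'s through the correct choice of $q$ and the Sobolev step. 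For $\tfrac{2n}{n+2}<p<2$ the situation is reversed and more delicate: on $\{\abs{\nabla h}>k\}$ the coercivity weight $\abs{\nabla h}^{p-2}=\abs{\nabla h}^{-(2-p)}$ is \emph{small} where $\abs{\nabla h}$ is large, so the energy term degenerates precisely where the iteration needs it, and DiBenedetto's treatment of the singular range uses a genuinely different truncation/interpolation scheme rather than the quadratic-case recursion you write down. So while your overall plan (rescale to an intrinsic unit cylinder, differentiate, De Giorgi) is the right one, the "effectively non-degenerate" heuristic papers over the central technical point, and the $p<2$ half cannot be dispatched the same way as $p\geq 2$.
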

\begin{proof}
 If $p\geq2$ it is the same statement as in \cite[Lemma 1]{AceMin07}. But also in the case of $\frac{2n}{n+2}<p<2$ the statement holds. In \cite[Lemma 2]{AceMin07} it is proved that if 
\[
 \dashint_{Q_{s^2,\lambda^\frac{p-2}2s}}\abs{\nabla h}^p\dz 
\leq K\lambda^p,
\]
it follows
\[
 \sup_{Q_{s^2,\lambda^\frac{p-2}2s}}\abs{\nabla h}\leq c\lambda. 
\]
Now we define $r=\lambda^\frac{p-2}2s$ which implies, that $s^2=\lambda^{2-p}r^2$. Therefore the estimate holds for all $\frac{2n}{n+2}<p<\infty$. 
\end{proof}
The main theorem of this section is the following.
\begin{theorem}
 \label{thm:decay}
Let $\partial_t h-\divergence(\abs{\nabla h}^{p-2}\nabla h)=0$ on $Q_{\rho}^\lambda$, such that
\[
 \frac{\lambda}{K}\leq\bigg(\dashint_{Q_{\rho}^\lambda}\abs{\nabla h}^p\dz\bigg)^\frac1p \leq K\lambda,
\]
then there exists a $c>0$ and $\alpha,\tau\in (0,1)$ depending only on $n,N,p,K$, such that for every $\theta\in(0,\tau]$
\[
 \sup_{z,w\in \theta Q_{\rho}^\lambda}\abs{V(\nabla h(w))-V(\nabla h(z))}^2
\leq 
c\theta^\alpha\dashint_{Q_{\rho}^\lambda} \abs{V(\nabla h)-\mean{V(\nabla h)}_{Q_{r}^\lambda}}^2\dz.
\]
\end{theorem}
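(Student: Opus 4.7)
The plan is to reduce by intrinsic scaling to the normalized case $\rho=\lambda=1$ using
$\tilde h(t,x):=h(\lambda^{p-2}\rho^2 t,\rho x)/(\lambda\rho)$, after which both sides of the claim transform identically and the hypothesis reads $K^{-p}\leq\dashint_{Q_1}\abs{\nabla\tilde h}^p\dz\leq K^p$. Write
\[
\Phi^2:=\dashint_{Q_1}\bigabs{V(\nabla h)-\mean{V(\nabla h)}_{Q_1}}^2\dz,
\]
fix a small threshold $\delta_0=\delta_0(n,N,p,K)>0$ to be determined, and split into two regimes.

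\textbf{Large excess: $\Phi^2\geq\delta_0$.} The $C^{1,\beta}$ theory of DiBenedetto--Friedman, which is built on the Lipschitz bound of Theorem~\ref{thm:sup}, gives $\osc{\nabla h}{\theta Q_{1/2}}\leq c\theta^\beta$ for some $\beta=\beta(n,N,p)\in(0,1)$. Theorem~\ref{thm:sup} further provides $\abs{\nabla h}\leq c$ on $\tfrac12 Q_1$; since $V$ is Lipschitz on bounded sets for $p\geq 2$, this yields $\osc{V(\nabla h)}{\theta Q_{1/2}}^2\leq c\theta^{2\beta}\leq (c/\delta_0)\theta^{2\beta}\Phi^2$, which is the claim with $\alpha=2\beta$.

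\textbf{Small excess: $\Phi^2<\delta_0$.} Choose $P_0$ so that $V(P_0)=\mean{V(\nabla h)}_{Q_1}$. The intrinsic lower bound $K^{-p}\leq\dashint\abs{\nabla h}^p$ together with the smallness of $\Phi^2$ forces $\abs{P_0}\geq c_0=c_0(K,p)>0$ once $\delta_0$ is small. Freeze the coefficient at $P_0$ and let $w$ solve the uniformly elliptic constant-coefficient linear parabolic system
\[
\partial_t w-\divergence(D\mathbf{A}(P_0)\nabla w)=0 \text{ on } \tfrac12 Q_1,\qquad w=h \text{ on } \pbnd \tfrac12 Q_1,
\]
with $\mathbf{A}(P):=\abs{P}^{p-2}P$. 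Standard Schauder/Campanato theory for such linear systems furnishes the excess decay
\[
\dashint_{\theta Q_{1/2}}\bigabs{\nabla w-\mean{\nabla w}_{\theta Q_{1/2}}}^2\dz\leq c\theta^2\dashint_{Q_{1/2}}\bigabs{\nabla w-\mean{\nabla w}_{Q_{1/2}}}^2\dz
\]
for $\theta\in(0,1/4]$. Testing the equation for $h-w$ against itself, controlling the principal part by \eqref{eq:hammer} and the Taylor remainder $\mathbf{A}(\nabla h)-\mathbf{A}(P_0)-D\mathbf{A}(P_0)(\nabla h-P_0)$ by \eqref{eq:hammer2}, together with the equivalence $\abs{V(\nabla h)-V(P_0)}^2\sim\abs{\nabla h-P_0}^2$ valid in the non-degenerate regime $\abs{P_0}\sim 1$, yields $\dashint_{Q_{1/2}}\abs{V(\nabla h)-V(\nabla w)}^2\dz\leq c\Phi^2$. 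The triangle inequality then gives the Campanato-type decay $\dashint_{\theta Q_{1/2}}\abs{V(\nabla h)-\mean{V(\nabla h)}_{\theta Q_{1/2}}}^2\dz\leq c\theta^\alpha\Phi^2$, and the pointwise oscillation on $\theta Q_1$ follows by Campanato's embedding (cf.\ Section~\ref{sec:pre}).

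The main obstacle is making the comparison fully quadratic in $\Phi$: the linearization remainder is only pointwise quadratic in $\nabla h-P_0$, so lifting it to an $L^2$-bound requires carefully combining the pointwise bound from Theorem~\ref{thm:sup} with \eqref{eq:hammer2}, and possibly iterating the comparison on a geometric sequence of cylinders to capture the sharp $\theta^\alpha$ exponent. A secondary subtlety is choosing $\delta_0$ so small that the small-excess constants $c_0$, and hence the ellipticity of $D\mathbf{A}(P_0)$, remain uniform in the iteration.
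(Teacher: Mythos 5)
Your proposal takes a genuinely different route from the paper. The paper does not re-derive the freezing/linearization step at all: it imports the dichotomy of Proposition~\ref{pro:cases} (which packages \cite[Propositions 3.1--3.3]{KuuMin12}) and then runs an iteration over a shrinking family of intrinsic cylinders $Q^{\lambda_i}_{\rho_i}$ with $\lambda_i=\eta^i\lambda$. Either Case~1 (non-degenerate) fires within the first $m$ steps, in which case the sub-excess decay of Case~1 is transported back to the original cube, or Case~2 (degenerate) persists for $m$ steps with $\eta^mK^2\leq\tfrac12$, forcing $\sup_{Q^{\lambda_m}_{\rho_m}}\abs{\nabla h}\leq\tfrac12\mean{\abs{\nabla h}^p}^{1/p}_{Q^\lambda_\rho}$, whence Lemma~\ref{lem:osc} gives $\lambda^p\lesssim\Phi^2(\rho)$ and Theorem~\ref{thm:decay1} finishes. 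You instead split directly on the size of $\Phi^2$ and, in the small-excess regime, re-do the linearization from scratch. Your large-excess case is essentially correct and is the same endgame as the paper's ``degeneracy persists'' branch, reached without the iteration. The conceptual simplification of splitting on $\Phi^2$ rather than on degeneracy of $\nabla h$ is attractive, but it pushes the hard work into the small-excess branch.

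In that branch there are two real gaps. First, the equivalence $\abs{V(\nabla h)-V(P_0)}^2\sim\abs{\nabla h-P_0}^2$ that you invoke is not a consequence of $\abs{P_0}\sim 1$ alone: for $p>2$ the two quantities are comparable only when $\nabla h$ is itself pointwise bounded away from zero, i.e.\ on the set where $\abs{\nabla h}\sim\abs{P_0}$. Turning ``$\Phi^2<\delta_0$ and $\abs{P_0}\gtrsim 1$'' into pointwise non-degeneracy of $\nabla h$ on an interior cylinder requires a measure-theoretic (De~Giorgi-type) argument; this is exactly the content of \cite[Proposition~3.1]{KuuMin12}, which the paper uses through Proposition~\ref{pro:cases} but which your proof neither cites nor re-derives. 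Second, the energy comparison you obtain, $\dashint_{Q_{1/2}}\abs{V(\nabla h)-V(\nabla w)}^2\dz\leq c\Phi^2$ with a constant $c\geq 1$, cannot be fed directly into the Campanato decay of the linear comparison function: after restricting to $\theta Q_{1/2}$ the comparison error is inflated by $\theta^{-(n+2)}$ and overwhelms $\theta^2\Phi^2$. One needs either a comparison error of order $o(\Phi^2)$ uniformly in $\delta_0$ (the caloric/$\mathbf A$-caloric approximation route) or the iteration scheme you only allude to. You flag this obstacle at the end, but as written the small-excess case is incomplete, and a quantitatively correct version of it would essentially reconstruct the machinery the paper takes off the shelf from \cite{KuuMin12}.
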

We start with a K-intrinsic cube $Q_\rho^\lambda\subset Q_T$ fixed.
To be able to state the result neatly we define for $r<\rho$
\begin{align}
  M(r)&:=\sup_{Q_r^\lambda}\abs{D h}
  \\
  \Phi(r)&:=\bigg(\dashint_{Q_r^\lambda} 
\Bigabs{V(D h)-\mean{V(\nabla h)}_{Q_r^\lambda}}^2\dz\bigg)^\frac12.
\end{align}

The classic elliptic result of Giaquinta and Modica \cite{GiaM86} was that there is a uniform constant $c$ and an $\alpha\in (0,1)$, such that $\Phi(\theta\rho)\leq c\theta^\alpha\Phi(\rho)$. It is then a standard procedure to gain the estimate of the oscillations. It actually follows by Lemma~\ref{lem:osc2} which can be found in the appendix. 
%
\begin{theorem}
\label{thm:decay1}
 Let $h$ be p-caloric on $Q_{r}^\lambda$, such that
\[
 \bigg(\dashint_{Q_{r}^\lambda}\abs{\nabla h}^p\dz\bigg)^\frac1p\leq K \lambda,
\]
then there exists an $\alpha,c>0$ depending only on $n,N,p,K$, such that for every $\theta\in(0,\frac14]$
\[
 \sup_{z,w\in \theta Q_{r}^\lambda}\abs{V(\nabla h(w))-V(\nabla h(z))}^2
\leq 
c\theta^\alpha \lambda^p.
\]
\end{theorem}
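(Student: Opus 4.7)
The idea is to reduce to the case of a bounded gradient via Theorem~\ref{thm:sup}, invoke the classical $C^{1,\beta}$ H\"older regularity of $\nabla h$ for $p$-caloric maps, and finally convert the resulting oscillation estimate for $\nabla h$ into one for $V(\nabla h)$ using the pointwise identity~\eqref{eq:hammer}.

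First I would apply Theorem~\ref{thm:sup} with the constant $K^p$ in the role of $K$, obtaining
\[
\sup_{\frac12 Q_r^\lambda}\abs{\nabla h}\leq c_1\lambda,\qquad c_1=c_1(n,N,p,K).
\]
Thus on the smaller cylinder $\frac12 Q_r^\lambda$ the gradient is bounded by a fixed multiple of $\lambda$, which is exactly the regime in which the classical $C^{1,\beta}$ theory is available.

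Next I would invoke the interior H\"older regularity of $\nabla h$ due to DiBenedetto--Friedman~\cite{DibFri85}. After rescaling to a unit intrinsic cylinder via $\tilde h(s,y):=(\lambda r)^{-1}h(\lambda^{2-p}r^2 s,ry)$, so that $\tilde h$ is $p$-caloric on $Q_1$ with $\abs{\nabla\tilde h}\leq c_1$, their estimates yield exponents $\beta\in(0,1)$ and a constant $c_2$, both depending only on $n,N,p,K$, such that
\[
\osc{\nabla h}{\theta Q_r^\lambda}\leq c_2\,\theta^{\beta}\lambda\qquad\text{for every }\theta\in(0,\tfrac14].
\]
Finally I would translate this into the claimed decay for $V(\nabla h)$ via \eqref{eq:hammer}. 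In the super-quadratic case $p\geq 2$, the identity $\abs{V(P)-V(Q)}^2\sim(\abs{P}+\abs{Q})^{p-2}\abs{P-Q}^2$ combined with the sup-bound of Step~1 gives
\[
\abs{V(\nabla h(w))-V(\nabla h(z))}^2\leq c\lambda^{p-2}\abs{\nabla h(w)-\nabla h(z)}^2\leq c\,\theta^{2\beta}\lambda^p,
\]
so one may take $\alpha=2\beta$. In the sub-quadratic range $\tfrac{2n}{n+2}<p<2$ the same identity yields the cruder pointwise bound $\abs{V(P)-V(Q)}^2\lesssim\abs{P-Q}^p$ (since $(\abs{P}+\abs{Q})^{p-2}\leq\abs{P-Q}^{p-2}$ when $p-2<0$), so one takes $\alpha=p\beta$.

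\textbf{Main obstacle.} The analytically substantive step is the second one: the H\"older exponent $\beta$ and the constant $c_2$ must depend only on $n,N,p,K$, independently of $\lambda$, $r$ and the solution. This rests on the scale invariance of the $p$-Laplace operator under the intrinsic scaling defining $Q_r^\lambda$, so that the rescaled function $\tilde h$ is again $p$-caloric on a unit cylinder with a normalized gradient bound, after which the classical results apply. Steps~1 and~3 are then essentially algebraic consequences of the cited facts and of \eqref{eq:hammer}.
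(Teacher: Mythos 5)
Your proposal is correct and is essentially the same route the paper takes: the paper simply declares Theorem~\ref{thm:decay1} to be a consequence of \cite[IX, Prop.~1.1,~1.2]{DiB93}, resp.\ \cite[Prop.~3.1--3.3]{KuuMin12}, and these references encode precisely the sup-bound plus intrinsic $C^{1,\beta}$ decay for $\nabla h$ that you invoke. You go one step further by spelling out the intrinsic rescaling that reduces to the unit cylinder with a normalized gradient bound, and the conversion from the oscillation of $\nabla h$ to that of $V(\nabla h)$ via~\eqref{eq:hammer}, which you handle correctly in both regimes (using $(\abs{P}+\abs{Q})^{p-2}\lesssim\lambda^{p-2}$ for $p\geq 2$, and $(\abs{P}+\abs{Q})^{p-2}\leq\abs{P-Q}^{p-2}$ for $p<2$).
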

The theorem is a consequence of \cite[IX, Prop 1.1,1.2]{DiB93}, resp.  \cite[Prop. 3.1-3.3]{KuuMin12}. We combine these statements in  the following proposition, as we will use them.
\begin{proposition}
\label{pro:cases}
 Let $h$ be p-caloric. Let
\[
M(\rho)\leq K\lambda.
    \]
Then one of the two alternatives hold:

Case 1, non degenerate: There exist $\beta,\delta_0\in(0,1)$ depending only on $n,N,p,K$ such that 
\begin{align*}
 \frac{\lambda}{4}\leq \inf\limits_{2\delta_0Q^\lambda_r}\abs{\nabla h}&\leq \sup\limits_{2\delta_0Q^\lambda_r}\abs{\nabla h}\leq K\lambda\\
 \text{ and }\osc{V(\nabla h)}{Q^\lambda_{\delta\rho}}^\frac12&\leq c\delta^\beta\Phi(\rho)\text{ for all }\delta\in(0,\delta_0).
\end{align*}

Case 2, degenerate: There exist $\sigma,\eta\in (0,1)$ depending only on $n,N,p,K$ such that
\[
 M(\sigma \rho)\leq \eta K\lambda.
\]
\end{proposition}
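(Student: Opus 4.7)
My plan is to carry out the classical dichotomy in the spirit of DiBenedetto's intrinsic scaling theory (\cite[Ch.~IX]{DiB93}, \cite{KuuMin12}). Fix small parameters $\eta, \sigma, \delta_0 \in (0,1)$ to be chosen. First I would set up the alternative as a tautology: either $M(\sigma \rho) \leq \eta K \lambda$, in which case Case~2 is established with these very $\sigma, \eta$; or $M(\sigma \rho) > \eta K \lambda$, and I must show Case~1 holds. All work lies in the second branch.

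Assume therefore $M(\sigma \rho) > \eta K \lambda$, while also $M(\rho) \leq K\lambda$. The next step is to propagate this sup-norm lower bound on $\abs{\nabla h}$ into a pointwise lower bound $\abs{\nabla h} \geq \lambda/4$ on the smaller cylinder $2\delta_0 Q_\rho^\lambda$. This is done by a De~Giorgi-type iteration on the sub-level sets of $\abs{\nabla h}$: combining Theorem~\ref{thm:sup} (which gives $\abs{\nabla h}\lesssim \lambda$ everywhere) with an energy inequality applied to $(\tfrac{\lambda}{2}-\abs{\nabla h})_+$, one shows the measure of the set where $\abs{\nabla h}<\lambda/4$ inside the intrinsic cylinder decays to zero under iteration, provided $\eta$ was taken sufficiently close to $1$. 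The parameter $\sigma$ is fixed so the iteration can be started from the lower bound on $M(\sigma\rho)$, and $\delta_0$ so that the iterated subcylinder stays inside $Q_\rho^\lambda$.

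Once $\lambda/4 \leq \abs{\nabla h} \leq K \lambda$ on $2\delta_0 Q^\lambda_\rho$, the nonlinearity $Q \mapsto \abs{Q}^{p-2} Q$ is smooth with uniformly positive-definite derivative on the range of $\nabla h$. After the intrinsic time rescaling $t \mapsto \lambda^{2-p}t$ the system becomes uniformly parabolic with coefficients comparable to $1$; standard linear parabolic Campanato/Schauder theory applied to the differentiated system then yields
\[
\osc{V(\nabla h)}{Q^\lambda_{\delta \rho}} \leq c\,\delta^{2\beta}\,\Phi(\rho)^2
\]
for some $\beta=\beta(n,N,p,K)\in(0,1)$ and all $\delta\in(0,\delta_0)$. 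Here I use that $V$ is bi-Lipschitz on $[\lambda/4,K\lambda]$ to pass between oscillations of $\nabla h$ and of $V(\nabla h)$, and the best-constant property to replace $\mean{\nabla h}$ inside the Campanato-norm right-hand side by $\mean{V(\nabla h)}$, so that the bound $\Phi(\rho)^2$ appears. Taking square roots gives Case~1.

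The main obstacle is the middle step: lifting a sup-norm lower bound on $\abs{\nabla h}$ over $Q^\lambda_{\sigma \rho}$ to a uniform pointwise lower bound on the smaller cylinder $2\delta_0 Q^\lambda_\rho$. This is the technical heart of DiBenedetto's $C^{1,\alpha}$ theory and requires a delicate De~Giorgi iteration inside the intrinsic geometry, with careful tracking of how $\eta, \sigma, \delta_0, K$ interact so that the iteration closes. This step is essentially off-the-shelf in the cited references, and the rest of the argument is then routine parabolic Schauder theory in the non-degenerate regime.
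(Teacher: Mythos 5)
The paper's own proof of this proposition is almost entirely a citation: the dichotomy itself, the two-sided bound $\lambda/4\leq\abs{\nabla h}\leq K\lambda$ on sub-cylinders, and the Campanato-type decay $\dashint_{\theta Q^\lambda_r}\abs{V(\nabla h)-\mean{V(\nabla h)}}^2\leq c\theta^{2\beta}\dashint_{Q^\lambda_r}\abs{V(\nabla h)-\mean{V(\nabla h)}}^2$ are all taken verbatim from \cite[Prop.~3.1--3.3]{KuuMin12}; the paper's only added ingredient is the telescoping Lemma~\ref{lem:osc2}, which upgrades the Campanato decay to the stated oscillation bound involving $\Phi(\rho)$. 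Your proposal takes the same route at the top level but attempts to re-derive the cited results rather than invoke them, which is more ambitious but introduces imprecision exactly where the argument is hardest.

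Two concrete issues with the middle step. First, the phrase ``an energy inequality applied to $(\lambda/2-\abs{\nabla h})_+$'' is not a well-posed starting point for the degenerate system: $\abs{\nabla h}$ is not itself a solution of a scalar parabolic equation for which such truncations are admissible test objects, and the DiBenedetto--Friedman machinery works component-wise (picking a direction $e$ for which $\sup\,(e\cdot\nabla h)$ is comparable to $\lambda$) rather than with $\abs{\nabla h}$ directly. Second, and more importantly, a sup-norm lower bound $M(\sigma\rho)>\eta K\lambda$ does not by itself provide the measure-smallness needed to launch a De Giorgi iteration; a single point of large gradient is compatible with the sub-level set $\{\abs{\nabla h}<\lambda/4\}$ having nearly full measure. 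The passage from ``sup does not decay'' to ``the bad set has small measure'' is precisely the content of DiBenedetto's logarithmic estimates, and it is the technical heart that cannot be dispatched as ``one shows the measure decays to zero under iteration.'' You do flag this as off-the-shelf, which is fair, but the sketch as written inverts the logical structure of the argument. Finally, you assert the oscillation bound directly from linear Schauder theory rather than passing, as the paper does, through the cited Campanato decay plus Lemma~\ref{lem:osc2}; this is a legitimate alternative (Schauder does give interior $C^{1,\beta}$ with the right dependence of constants), but be aware the paper does not go that way.
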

%
\begin{proof}
We only have to show that in Case~1,  $\osc{V(\nabla h)}{Q^\lambda_{\delta\rho}}^\frac12\leq c\delta^\beta\Phi(\rho)$ for $\delta\in(0,\delta_0)$. Anything else can be found in \cite[Proposition 3.1-3.3]{KuuMin12}.

  By \cite[Proposition 3.1]{KuuMin12} we know, that if Case~1 does not hold, there exists $\delta_1\in(0,1)$ such that for every sub cube $Q^\lambda_r(z)\subset \delta_1 Q^\lambda_\rho$ we have 
\[
 \frac{\lambda}{4}\leq \inf\limits_{Q^\lambda_r(z)}\abs{\nabla h}\leq \sup\limits_{Q^\lambda_r(z)}\abs{\nabla h}\leq K\lambda.
\]
Therefore we have by \cite[Proposition 3.2]{KuuMin12} for all these sub cubes
\[
 \dashint_{\theta Q^\lambda_r(z)}\abs{V(\nabla h)-\mean{V(\nabla h)}_{\theta Q^\lambda_r(z)}}^2\leq c\theta^{2\beta}\dashint_{Q^\lambda_r(z)}\abs{V(\nabla h)-\mean{V(\nabla h)}_{Q^\lambda_r(z)}}^2.
\]
this implies the result by Lemma~\ref{lem:osc2} with $\delta_0=\frac{\delta_1}{2}$.
\end{proof}

\begin{proof}[Proof of Theorem~\ref{thm:decay}]
Before we can prove the decay we have to do some preliminary work.
If for $Q^\lambda_\rho$ Case~1 of Proposition~\ref{pro:cases} holds, we have the desired decay. 
%

If Case~2 holds, we shall iterate. In this case the degenerate alternative of Proposition~\ref{pro:cases} holds for $Q^\lambda_\rho$. We will now construct another smaller cube on which we can apply Proposition~\ref{pro:cases} again.
 
We find for $\lambda_1=\eta \lambda$, 
\[
Q^{\lambda}_{\sigma\eta^\frac{2-p}2\rho}\subset Q^{\lambda_1}_{\sigma\rho}\subset Q^{\lambda}_{\sigma\rho}\text{ if } p<2,\text{ and }
Q^\lambda_{\sigma\eta^\frac{p-2}2\rho}\subset Q^{\lambda_1}_{\sigma\eta^\frac{p-2}2\rho} \subset Q^{\lambda}_{\sigma\rho} \text{ if $p\geq 2$}.
\] 
We define 
\begin{align*}
\begin{aligned}
 &\rho_1=a\rho\text{ where }\text{$a=\sigma$ for $p<2$ and $a=\sigma\eta^\frac{p-2}2$ for $p\geq 2$}\\
&\text{and }r_1=b\rho\text{ with }b=\eta^\frac{2-p}2\sigma \text{ if }p<2\text{ and }b=\sigma\text{ if } p\geq 2.
\end{aligned}
\end{align*}
We find
\[
 M(r_1)\leq \sup\limits_{Q^{\lambda_1}_{\rho_1}}\abs{\nabla h}\leq M(\sigma\rho)\leq K\eta\lambda=K\lambda_1.
\]
Thus $Q^{\lambda_1}_{\rho_1}$ satisfies the assumption of Proposition~\ref{pro:cases}. If Case~2 holds for this cube we can iterate further with
\begin{align}
 \label{eq:it}
\begin{aligned}
 \lambda_i=\eta^i\lambda;\,\rho_i=a\rho_{i-1}
\text{ and }r_i=br_{i-1},
\end{aligned}
\end{align}
and $a,b$ defined above. 
%
If Case~2 holds also for $Q^{\lambda_{j}}_{\rho_{j}}$ and $1\leq j\leq i-1$, then we find
\[
 Q^{\lambda}_{r_i}\subset Q^{\lambda_i}_{\rho_i}\subset Q^{\lambda_{i-1}}_{\rho_{i-1}}\text{ and }\sup\limits_{Q^{\lambda_i}_{\rho_i}}\abs{\nabla h}\leq\sup\limits_{Q^{\lambda_{i-1}}_{\sigma\rho_{i-1}}}\abs{\nabla h} \leq K\eta\lambda_{i-1}=K\eta^i\lambda.
\]
Let us fix $m\in \setN$, such that $\eta^mK^2\leq\frac12$. This implies that if the degenerate alternative holds for all $i\leq m$, then
\begin{align}
\label{eq:deg}
\sup_{Q_{r^m}^{\lambda}}\abs{\nabla h}\leq \sup_{Q^{\lambda_m}_{\rho_m}}\abs{\nabla h}\leq K\eta^m\lambda\leq 
\frac 12\mean{\abs{\nabla h}^p}_{Q^\lambda_\rho}^\frac1p
\end{align}
by the assumption that $Q^\lambda_\rho$ is intrinsic.

Now we are able to prove the decay.
Let us first assume, that for one $i\in\set{0,...,m}$ the non-degenerate Case~1 of Proposition~\ref{pro:cases} holds. This implies for $\delta\in (0,\tau)$, where $\tau=\frac{\delta_0}{b^m}$, that
\[
 \osc{V(\nabla h)}{\delta Q_{\rho}^\lambda}^\frac12\leq \osc{V(\nabla h)}{\delta b^m Q_{r_m}^\lambda}^\frac12\leq c\delta^\beta\Phi^{\lambda_i}(\rho_i)\leq c\delta^\beta\Phi^{\lambda}(\rho),
\]
as
\[
Q^{\lambda_i}_{\rho_i}\subset Q^\lambda_\rho\text{ and }\frac{\abs{Q^\lambda_\rho}}{\abs{Q^{\lambda_i}_{\rho_i}}}\leq \frac{\abs{Q^\lambda_\rho}}{\abs{Q^{\lambda_m}_{\rho_m}}}\leq c\text{ depending only on $n,N,p,K$.}
\]
This leaves the case, when for all $i\in\set{0,...,m}$ the degenerate alternative (Case~2) holds. In this case we know by \eqref{eq:deg}
\[
\sup_{Q^{\lambda_m}_{\rho_m}}\abs{\nabla h}\leq K\eta^m\lambda\leq 
\frac 12\mean{\abs{\nabla h}^p}_{Q^\lambda_\rho}^\frac1p.
\]
 This implies that
\[
  \abs{\mean{V(\nabla h)}_{Q^{\lambda_m}_{\rho_m}}}\leq\frac1{2^\frac p2}\mean{\abs{V(\nabla h)}^2}_{Q^\lambda_\rho}^\frac12.
\]
Therefore we gain by Lemma~\ref{lem:osc}.
\[
 \lambda^p\leq K^p\mean{\abs{\nabla h}^p}_{Q^\lambda_\rho}\leq c\dashint_{Q_{\rho}^\lambda} \abs{V(\nabla h)-\mean{V(\nabla h)}_{Q_{r}^\lambda}}^2\dz,
\]
again, as 
\[
Q^{\lambda_i}_{\rho_i}\subset Q^\lambda_\rho\text{ and }\frac{\abs{Q^\lambda_\rho}}{\abs{Q^{\lambda_i}_{\rho_i}}}\leq \frac{\abs{Q^\lambda_\rho}}{\abs{Q^{\lambda_m}_{\rho_m}}}\leq c\text{ depending only on $n,N,p,K$.}
\]
Finally, the last estimate combined with Theorem~\ref{thm:decay1} implies the decay also in this case.
\end{proof}

\section{A BMO result for $p\geq 2$}
\noindent
Theorem~\ref{thm:m} is a consequence of a more general result. From this we will conclude also other Campanto like estimates. 

Before proving the main result we will have to prove some intermediate results. The key ingredient is to carefully choose a family of intrinsic cylinders. 
\subsection{Finding a scaled sequence of cubes}
To treat the scaling behavior in a way to gain a $\setBMO$ result for $\eqref{eq}$ is quit delicate. Our estimates are based on comparison principles: Whenever one knows that $\normg$ is small, then $u$ is ''close'' to a p-caloric comparison solution.
%

In the following we will construct sub-intrinsic cubes
with properties convenient for our needs.
\begin{lemma}
\label{lem:scal}
Let $p\geq2$. Let $Q_{S,R}(t,x)\subset Q_T$ and $b\in (0,2)$. For every $0<r\leq R$ there exists $s(r)$, $\lambda_r$ and $Q_{s(r),r}(t,x)$ with the following properties. Let $r,\rho\in (0,R]$ and $r<\rho$, then 
\begin{enumerate}
\item\label{scal:-1}  $0\leq s(r)\leq S$ and $s(r)=\lambda_r^{2-p}r^2$. Especially $Q_{s(r),r}(t,x)=Q_r^{\lambda_r}\subset Q_T$.
\item\label{scal:0}  $s(r)\leq \big(\frac{r}{\rho}\big)^bs(\rho)$, the function $s$ is continuous and strictly increasing on $[0,R]$. Especially $Q_r^{\lambda_r}\subset Q_\rho^{\lambda_{\rho}}$.
 \item \label{scal:2} $\dashint_{Q_r^{\lambda_r}}\abs{\nabla u}^p\dz\leq\lambda_r^p$, i.e. $Q_r^{\lambda_r}$ is sub-intrinsic.
\item \label{scal:5}  if $s(r)<\big(\frac{r}{\rho}\big)^bs(\rho)$, then there exists $r_1\in[r,\rho)$ such that $Q^{\lambda_{r_1}}_{r_1}$ is intrinsic.
\item \label{scal:6} if for all $r\in (r_1,\rho)$, $\Qr{}$ is strictly sub-intrinsic, then $\lambda_{r}\leq\big(\frac{r}{\rho}\big)^\beta\lambda_{\rho}$ for all $r\in [r_1,\rho]$ and $\beta=\frac{2-b}{p-2}\in (0,\frac{2}{p-2})$.
 \item \label{scal:3} for $\theta\in (0,1]$, $\theta^\beta\lambda_{r}\leq\lambda_{\theta r}\leq \frac{c\lambda_r}{\theta^\frac{n+2}2}$.
\item \label{scal:4} for $\theta\in(0,1], \abs{Q_{\theta r}^{\lambda_{\theta r}}}^{-1}
\leq c\theta^{-(n+2)(1+\frac{p-2}2)}\abs{{Q_{r}^{\lambda_{r}}}}^{-1} $.
\item \label{scal:7} for $\theta\in(0,1]$, we find $\Qr{\sigma}\subset\theta\Qr{}$ for $\sigma=\theta^\frac{2}{b}$.
\end{enumerate}
The constant only depends on the dimensions and $p$.
\end{lemma}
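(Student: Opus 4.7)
My plan is a two-step construction. First, for each $\rho\in(0,R]$, let $\lambda^*(\rho)$ denote the smallest $\lambda\geq(\rho^2/S)^{1/(p-2)}$ for which $Q_\rho^{\lambda}$ is sub-intrinsic -- such a $\lambda$ exists by continuity in $\lambda$ and because $\lambda^p$ eventually dominates $\dashint_{Q_\rho^\lambda}|\nabla u|^p\dz$ -- and set $s^*(\rho):=(\lambda^*(\rho))^{2-p}\rho^2\in(0,S]$. Second, monotonize by setting
\begin{equation*}
 \phi(r):=\inf_{\rho\in[r,R]}\frac{s^*(\rho)}{\rho^b},\qquad s(r):=r^b\phi(r),\qquad \lambda_r:=\bigl(r^2/s(r)\bigr)^{1/(p-2)}.
\end{equation*}
By design, $s$ is the largest function pointwise below $s^*$ whose rescaled version $s(r)/r^b$ is non-decreasing in $r$ -- precisely what properties (\ref{scal:0}) and (\ref{scal:2}) demand.

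With this definition (\ref{scal:-1}) and (\ref{scal:2}) are immediate from $s(r)\leq s^*(r)\leq S$ (take $\rho=r$ in the infimum) and from $s(r)=\lambda_r^{2-p}r^2$. Property (\ref{scal:0}) follows because $\phi$ is non-decreasing (infimum over a shrinking set), continuous (inherited from continuity of $s^*$ in $\rho$), and strictly positive, so $s=r^b\phi$ is continuous and strictly increasing. The heart of the argument is the intrinsic dichotomy (\ref{scal:5})--(\ref{scal:6}): if $\phi(r)<\phi(\rho)$, the infimum defining $\phi(r)$ is attained at some $r_1\in[r,\rho)$, and monotonicity of $\phi$ forces $\phi(r_1)=s^*(r_1)/r_1^b$, i.e.\ $s(r_1)=s^*(r_1)$, i.e.\ $Q_{r_1}^{\lambda_{r_1}}$ is intrinsic. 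The contrapositive forces $\phi$ to be constant on any interval whose interior cubes are entirely strictly sub-intrinsic -- any strict increase of $\phi$ on that interior would, by (\ref{scal:5}) applied to a sub-interval, produce a further intrinsic point inside. Unwinding the constancy $s(r)/r^b=s(\rho)/\rho^b$ algebraically yields $\lambda_r=(r/\rho)^{(2-b)/(p-2)}\lambda_\rho$, which is (\ref{scal:6}) with $\beta=(2-b)/(p-2)$.

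For the scaling (\ref{scal:3})--(\ref{scal:7}), the lower bound in (\ref{scal:3}) is just (\ref{scal:0}) applied to $\theta r\leq r$. For the upper bound, one of three alternatives holds: either $Q_{\theta r}^{\lambda_{\theta r}}$ is itself intrinsic, or the infimum defining $s(\theta r)$ is attained at some $\tau_0\in(\theta r,r]$ (necessarily intrinsic by the argument of (\ref{scal:5})), or at some $\tau>r$, in which case $\lambda_{\theta r}=\theta^\beta\lambda_r\leq\lambda_r$ directly. In the first two cases, the inclusion $Q_{\tau_0}^{\lambda_{\tau_0}}\subset Q_r^{\lambda_r}$ combined with sub-intrinsicity of the larger cube yields
\begin{equation*}
 \lambda_{\tau_0}^p=\dashint_{Q_{\tau_0}^{\lambda_{\tau_0}}}|\nabla u|^p\dz\leq\frac{|Q_r^{\lambda_r}|}{|Q_{\tau_0}^{\lambda_{\tau_0}}|}\lambda_r^p,
\end{equation*}
and inserting the volume formula $|Q_\rho^\mu|=c_n\mu^{2-p}\rho^{n+2}$ collapses this to $\lambda_{\tau_0}\leq(r/\tau_0)^{(n+2)/2}\lambda_r\leq\theta^{-(n+2)/2}\lambda_r$, whence $\lambda_{\theta r}\leq\lambda_{\tau_0}$. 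Estimate (\ref{scal:4}) is then a direct computation from (\ref{scal:3}) and the volume formula, while (\ref{scal:7}) follows immediately from (\ref{scal:0}) because $\sigma=\theta^{2/b}$ gives $s(\sigma r)\leq\sigma^b s(r)=\theta^2 s(r)$, matching the time-extent of $\theta Q_r^{\lambda_r}$.

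The main obstacle I foresee is technical rather than conceptual: one must verify continuity of $s^*$ and actual attainment of the infima -- especially in the boundary regime where the cylinder-fitting constraint $\lambda\geq(\rho^2/S)^{1/(p-2)}$ binds rather than the intrinsic equation $\dashint|\nabla u|^p\dz=\lambda^p$ being attained -- since without this bookkeeping the intrinsic cube produced in (\ref{scal:5}) may fail to exist and the whole dichotomy above breaks down.
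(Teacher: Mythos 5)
Your construction is in substance the same as the paper's. Your $s^*(\rho)$ (equivalently $\lambda^*(\rho)$) is exactly the paper's $\tilde s(\rho)$: since $s=\lambda^{2-p}\rho^2$ is a decreasing bijection (for $p>2$), the smallest sub-intrinsic $\lambda$ and the largest sub-intrinsic $s\leq S$ are the same object, and the paper's defining inequality $\big(\int_{Q_{s,\rho}}|\nabla u|^p\big)^{p-2}s^2\leq \rho^{2p}\abs{B_\rho}^{p-2}$ unwinds to $\dashint_{Q_{s,\rho}}|\nabla u|^p\leq\lambda^p$. Your monotonization $\phi(r)=\inf_{a\in[r,R]}s^*(a)/a^b$, $s(r)=r^b\phi(r)$ is literally the paper's $s(r)=\min_{R\geq a\geq r}(r/a)^b\tilde s(a)$, and your proofs of \ref{scal:0}, \ref{scal:2}, \ref{scal:6}, \ref{scal:3}, \ref{scal:4}, \ref{scal:7} follow the paper's plan.

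There is, however, a genuine gap that you yourself flag at the end but do not close, and as written the body of your argument slides over it. In your proof of \ref{scal:5} you assert ``$s(r_1)=s^*(r_1)$, i.e.\ $Q_{r_1}^{\lambda_{r_1}}$ is intrinsic.'' That implication is false without further input: $s^*(r_1)$ is the largest sub-intrinsic time-height \emph{subject to} $s\leq S$, so when the constraint $\lambda\geq(\rho^2/S)^{1/(p-2)}$ binds (i.e.\ $s^*(r_1)=S$) the cube may be strictly sub-intrinsic. This is precisely your ``boundary regime'' worry, and without ruling it out the dichotomy of \ref{scal:5} — and with it the argument for the upper bound in \ref{scal:3}, which relies on the identity $\lambda_{\tau_0}^p=\dashint_{Q_{\tau_0}^{\lambda_{\tau_0}}}|\nabla u|^p\dz$ — does not go through. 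The paper closes the gap as follows: under the hypothesis $s(r)<(r/\rho)^b s(\rho)$, one has
\[
 s(r_1)=s^*(r_1)\leq\Big(\frac{r_1}{\rho}\Big)^b s(\rho)\leq\Big(\frac{r_1}{R}\Big)^b s^*(R)\leq\Big(\frac{r_1}{R}\Big)^b S<S,
\]
using $\phi(\rho)\leq s^*(R)/R^b$ and $r_1<\rho\leq R$; thus $s^*(r_1)<S$, the maximum is interior, and the defining inequality is saturated, giving intrinsicity. A similar chain handles the intrinsic sub-cube $Q_{\tau_0}^{\lambda_{\tau_0}}$ in \ref{scal:3}. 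The other obstacle you flag — continuity of $s^*$, which is needed both for the infimum over $[r,R]$ to be attained and for the continuity claim in \ref{scal:0} — is also not automatic; the paper proves it by a uniform $\epsilon$-$\delta$ argument on compact sub-intervals $[r_0,R]$, exploiting the two-sided bound on $\big(\int_{t-\tilde s(r)}^{t}\int_{B_r}|\nabla u|^p\big)^{p-2}$. You should include both steps to make the proposal a proof.
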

\begin{proof}
Let $Q_{S,R}(t,x)\subset Q_T$. In the following we often omit the point $(t,x)$.
We start, by defining for every $r\in (0,R]$ 
\begin{align}
\label{eq:s}
 &\tilde{s}(r)=\max\Bigset{s\leq S\Big|\bigg(\int_{t-s}^{t}
  \int_{B_r(x)}\abs{\nabla u}^p \dz\bigg)^{p-2}s^2\leq r^{2p}\abs{B_r}^{p-2}}.
\end{align}
The function $\tilde{s}(r)$ is well defined and strictly positive for $r>0$.
We define $\tilde{\lambda}_r$ by the equation $r^2\tilde{\lambda}_r^{2-p}=\tilde{s}(r)$. We will first show, that $Q^{\tilde{\lambda}_r}_r:=Q_{\tilde{s}(r),r}$ holds \ref{scal:2}. By construction we find, that
\begin{align}
  \label{eq:sforp}
\begin{aligned}
  \bigg(\int_{Q_{r,\tilde{s}(r)}}\abs{\nabla u}^p \dz\bigg)^{p-2}\tilde{s}(r)^2\leq r^{2p}\abs{B_r}^{p-2}.
\end{aligned}
\end{align}
This implies that
 \[
 \bigg(\dashint_{Q_{r,\tilde{s}(r)}}\abs{\nabla u}^p\dz\bigg)^{p-2} \tilde{s}(r)^p\leq r^{2p}=(\tilde{\lambda}^{(p-2)}\tilde{s}(r))^p
\]
which implies
\begin{align}
\dashint_{Q_{r,\tilde{s}(r)}}\abs{\nabla u}^p\dz \leq \tilde{\lambda}_r^p,
\label{eq:sleqS}
\text{ and if }
\dashint_{Q_{\tilde{s}(r),r}}\abs{\nabla u}^p\dz<\tilde{\lambda}_r^p\text{, then $\tilde{s}(r)=S$.}
\end{align}
Next we will show, that $\tilde{s}(r)$ is continuous for $r\in(0,R]$.
For $\epsilon \leq \tilde{s}(r)\leq S-\epsilon$ and $r_0>0$, we find that $\big(\int^t_{t-\tilde{s}(r)}\int_{B_r}\abs{\nabla u}^p \dz\big)^{p-2} s^2$ is growing of order 2.
Because the growth rate is explicitly bounded by  
\[
\frac{\abs{B_R}^{p-2}R^{2p}}{\epsilon^2}\geq \bigg(\int^t_{t-\tilde{s}(r)}\int_{B_r(x)}\abs{\nabla u}^p \dz\bigg)^{p-2}\geq \frac{r_0^{2p}\abs{B_{r_0}}^{p-2}}{S^2},
\]
for $r\in[r_0,R]$.
This implies that there exists a $\delta_{\epsilon,r_0} >0$, such that for all $r,r_1\in [r_0,R]$ with $\abs{r-r_1}<\delta_{\epsilon,r_0}$
\begin{align*}
 &\bigg(\int_{t-\tilde{s}(r)}^{t}
  \int_{B_{r_1}(x)}\abs{\nabla u}^p \dz\bigg)^{p-2}(\tilde{s}(r)-\epsilon)^2 < r_1^{2p}\abs{B_{r_1}^{p-2}} \\
&\qquad<\bigg(\int_{t-\tilde{s}(r)}^{t}
  \int_{B_{r_1}(x)}\abs{\nabla u}^p \dz\bigg)^{p-2}(\tilde{s}(r)+\epsilon)^2.
\end{align*}
as $\big(\int_{t-s}^{t}\int_{B_r(x)}\abs{\nabla u}^p \dz\big)^{p-2}$ and $r^{2p}\abs{B_r}^{p-2}$ are both uniformly continuous in $r$.
Now we gain immediately
\begin{align*}
 &\bigg(\int_{t-\tilde{s}(r)+\epsilon}^{t}
  \int_{B_{r_1}(x)}\abs{\nabla u}^p \dz\bigg)^{p-2}(\tilde{s}(r)-\epsilon)^2 < r_1^{2p}\abs{B_{r_1}^{p-2}} \\
&\qquad<\bigg(\int_{t-\tilde{s}(r)-\epsilon}^{t}
  \int_{B_{r_1}(x)}\abs{\nabla u}^p \dz\bigg)^{p-2}(\tilde{s}(r)+\epsilon)^2,
\end{align*}
which implies that $\abs{\tilde{s}(r)-\tilde{s}(r_1)}<2\epsilon$.
 
Let us define $s_\epsilon(r)=\max\set{\epsilon,\min\set{\tilde{s}(r), S-\epsilon}}$. By the previous calculations we find that $s_{\epsilon}$ is uniformly continuous, especially $\abs{s_\epsilon(r)-s_\epsilon(r_1)}\leq 2\epsilon$ for $r,r_1\in [r_0,R]$ with $\abs{r-r_1}<\delta_{\epsilon,r_0}$. Therefore 
\[
 \abs{\tilde{s}(r_1)-\tilde{s}(r)}\leq \abs{\tilde{s}(r_1)-s_\epsilon(r_1)}+\abs{s_\epsilon(r_1)-s_\epsilon(r)}+\abs{s_\epsilon(r)-\tilde{s}(r)}\leq 4\epsilon.
\]
 As $r_0$ was arbitrary we find that $\tilde{s}(r)$ is continuous on $(0,R]$.

Now it might happen, that $r<\rho$ and $\tilde{s}(r)>\tilde{s}(r)$. To avoid that we define for $b\in (0,2)$
\begin{align*}
s(r)=\min_{R\geq a\geq r}\Big(\frac{r}{a}\Big)^b\tilde{s}(a). 
\end{align*}
The minimum exists, as $\Big(\frac{r}{a}\Big)^b\tilde{s}(a)$ is continuous in $a$. 
As for $\rho\in(r,R]$
\begin{align}
\label{eq:grow}
 s(r)=\min\Bigset{\min_{\rho\geq a\geq r}\Big(\frac{r}{a}\Big)^b\tilde{s}(a),\Big(\frac{r}{\rho}\Big)^bs(\rho)}
\end{align}
we find that
$s(r)< s(\rho)$. Now we define $\lambda_r:=\big(\frac{r^2}{s(r)}\big)^\frac1{p-2}\geq \tilde{\lambda}_r$ and $Q_r^{\lambda_r}:=Q_{s(r),r}$. By this definition we find \ref{scal:-1} and \ref{scal:0}, as $\lim_{r\to0}s(r)\leq \lim_{r\to 0}\big(\frac{r}{R}\big)^bS(R)=0$. 

We show \ref{scal:2}, by \eqref{eq:sforp} 
\begin{align}
\label{eq:mon}
 \dashint_{Q_{s(r),r}}\abs{\nabla u}^p \leq \frac{\tilde{s}(r)}{s(r)} \dashint_{Q_{\tilde{s}(r),r}}\abs{\nabla u}^p
=
\Big(\frac{{\lambda}_r}{\tilde{\lambda}_r}\Big)^{p-2}\dashint_{Q_{\tilde{s}(r),r}}\abs{\nabla u}^p \leq \tilde{\lambda}_r^2{\lambda}_r^{p-2}\leq \lambda_{r}^p.
\end{align}
To prove \ref{scal:5} we assume that $s(r)<\big(\frac{r}{\rho}\big)^bs(\rho)$. Then there exist a $r_1\in [r,\rho)$, such that 
\[
\Big(\frac{r}{r_1}\Big)^b\tilde{s}(r_1)=s(r)=\min_{R\geq a\geq r}\Big(\frac{r}{a}\Big)^b\tilde{s}(a)\leq \Big(\frac{r}{r_1}\Big)^b\min_{R\geq a\geq r_1}\Big(\frac{r_1}{a}\Big)^b\tilde{s}(a)=\Big(\frac{r}{r_1}\Big)^b s(r_1).
\]
Now because $\tilde{s}(r_1)\geq s(r_1)$ we find $\tilde{s}{r_1}=s(r_1)$. Since also $s(r)<\big(\frac{r_1}{\rho}\big)^bs(\rho)\leq \big(\frac{r_1}{R}\big)^b S$ we find by \eqref{eq:sforp} that $Q_{s(r_1),r_1}=Q_{r_1}^{\lambda_{r_1}}$ is intrinsic. This implies \ref{scal:5}. 

To prove \ref{scal:6} we gain by \ref{scal:5} that if $Q_a^{\lambda_a}$ is strictly sub-intrinsic for all $a\in(r,\rho)$, then 
$s(a)=\big(\frac{a}{\rho}\big)^bs(\rho)$ for all $a\in(r,\rho)$.
Now we calculate
\[
 \lambda_a^{p-2}= \frac{a^2}{s(a)}=\frac{a^2}{\big(\frac{a}{\rho}\big)^bs(\rho)}=\Big(\frac{a}{\rho}\Big)^{2-b}\lambda_\rho^{p-2},
\]
this proves \ref{scal:6}, with $\beta=\frac{2-b}{p-2}$.

To prove \ref{scal:3} we take $\theta\in(0,1)$. If $s(\theta r)=\theta^bs(r)$ we are finished. If $s(\theta r)<\theta^bs(r)$, we find by \ref{scal:5} that there is a $\sigma\in[\theta,1)$ with $s(\theta r)=\big(\frac{\theta}{\sigma}\big)^b s(\sigma r)$ and $Q_{\sigma r}^{\lambda_{\sigma r}}$ is intrinsic. This implies using also \ref{scal:2}
\begin{align*}
 \lambda_{\sigma r}^2=\frac{c}{(\sigma r)^{n+2}}\int\limits_{Q_{s(\sigma r),\sigma r}}\abs{\nabla u}^p\dz\leq
 \frac{cs(r)}{r^2\sigma^{n+2}} \dashint_{s(r),r}\abs{\nabla u}^p\dz
 \leq \frac{c\lambda_r^2}{\theta^{n+2}}.            
\end{align*}
By the definition of $\lambda_r$ we find for $\beta=\frac{2-b}{p-2}$ and the previous that
\[
 \lambda_r\leq \theta^{-\beta}\lambda_{\theta r}\text{ and }\lambda_{\theta_r}\leq \frac c{\theta^\frac{n+2}2}\lambda_r,
\]
which implies \ref{scal:3} and \ref{scal:4}.
To prove \ref{scal:7} we take $\theta Q_{s(r),r}=Q_{\theta^2 s(r),\theta r}$ we define $\sigma<\theta$, such that $\sigma^b =\theta^2$. Now we find by \eqref{eq:grow}, that  $s(\sigma r)\leq \sigma^bs(r)=\theta^2 s(r)$. 
\end{proof}

\subsection{Comparison}
In this section we will derive a comparison estimate which will allow us to gain BMO estimates.
Let $u$ be a solution to \eqref{eq} on $I\times B$. As we want to use Theorem~\ref{thm:decay}, we will have to start with an intrinsic cylinder. We therefore take any intrinsic cylinder $Q_R^{\lambda_0}(z)\subset I\times B$, i.e.
\[
 \dashint_{Q_R^{\lambda_0}(z)}\abs{\nabla u}^p =\lambda_0^p.
\]
In this section we define $Q^{\lambda_r}_r$ as the sub-intrinsic cylinders all sharing the same center, which are constructed by Lemma~\ref{lem:scal}.
 By comparison we mean the local comparison to a p-caloric function. I.e.\ for $r\in (0,R)$ we will compare $u$ to solutions of
\begin{align}
\label{eq:hom}
\begin{aligned}
 \partial_t h-\divergence(\abs{\nabla h}^{p-2}\nabla h)&=0\text{ on }Q_{r}^{\lambda_r}\\
h&=u\text{ on }\partial_{par}Q_{r}^{\lambda_r}.
\end{aligned}
\end{align}
\begin{lemma}
 \label{eq:comparison}
Let $p\geq 2$, $(t,t-\lambda_r^{2-p})\times B_r(x)=:\Qr{}\subset I\times B$ and $g\in L^\infty(I,\setBMO(B))$.  For $h$ the solution of \eqref{eq:hom} and $u$ the solution of \eqref{eq} we have
\begin{align*}
\begin{aligned}
\lambda_r^{p-2}\dashint_{B_r(x)}&\frac{\abs{u-h}^2(t)}{r^2}\dy +
 \dashint_{\Qr{}}\abs{V(\nabla u)-V(\nabla h)}^2\dz
\leq  c\norm{g}_{L^\infty(I, \setBMO(B_r(x)))}^{p'}.
\end{aligned}
\end{align*}
\end{lemma}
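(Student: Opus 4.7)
The plan is the standard comparison argument: subtract \eqref{eq:hom} from \eqref{eq} and test the resulting equation by $u-h$ itself, which is admissible because $u-h$ vanishes on $\pbnd\Qr{}$ by construction of $h$. After a Steklov averaging in time to justify the use of the time derivative, pairing
\[
\partial_t(u-h)-\divergence\big(\abs{\nabla u}^{p-2}\nabla u-\abs{\nabla h}^{p-2}\nabla h\big)=-\divergence g
\]
with $u-h$ and integrating over $\Qr{}$ produces
\[
\tfrac12\int_{B_r(x)}\abs{u-h}^2(t)\dy+\int_{\Qr{}}\big(\abs{\nabla u}^{p-2}\nabla u-\abs{\nabla h}^{p-2}\nabla h\big)\cdot\nabla(u-h)\dz=\int_{\Qr{}}g\cdot\nabla(u-h)\dz,
\]
using that $(u-h)$ vanishes on the initial time slice of $\Qr{}$. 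I would then invoke the first line of \eqref{eq:hammer} to replace the second left-hand term by (a constant multiple of) $\int_{\Qr{}}\abs{V(\nabla u)-V(\nabla h)}^2\dz$.

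\textbf{Right-hand side.} Since $u-h$ vanishes on $\partial B_r(x)$ for almost every $\tau$, integration by parts in space lets me subtract the spatial mean $\mean{g(\tau)}_{B_r(x)}$ from $g(\tau,\cdot)$ inside the integral without changing its value. I would then apply the shifted Young-type inequality \eqref{eq:hammer2} pointwise with $P=\nabla u$, $Q=\nabla h$, and $G_1-G_0=g-\mean{g(\tau)}_{B_r(x)}$, choosing $\delta$ small enough that the $V$-term is absorbed into the left-hand side. What remains is
\[
c\int_{\Qr{}}\big(\abs{\nabla h}+\abs{g-\mean{g(\tau)}_{B_r(x)}}\big)^{p'-2}\abs{g-\mean{g(\tau)}_{B_r(x)}}^2\dz.
\]
Since $p\geq 2$ forces $p'-2\leq 0$, the factor $(\abs{\nabla h}+\abs{g-\mean{g}})^{p'-2}$ is decreasing in its argument, so dropping the nonnegative shift $\abs{\nabla h}$ majorises the integrand pointwise by $\abs{g-\mean{g(\tau)}_{B_r(x)}}^{p'}$.

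\textbf{Conclusion and normalisation.} For almost every $\tau$, the John--Nirenberg estimate yields
\[
\dashint_{B_r(x)}\abs{g(\tau)-\mean{g(\tau)}_{B_r(x)}}^{p'}\dy\leq c\,\norm{g(\tau)}_{\setBMO(B_r(x))}^{p'}\leq c\,\norm{g}_{L^\infty(I,\setBMO(B_r(x)))}^{p'},
\]
and integration in $\tau$ controls the full space-time integral on the right. Dividing through by $\abs{\Qr{}}=\abs{B_r(x)}\lambda_r^{2-p}r^2$ normalises the gradient term directly to $\dashint_{\Qr{}}\abs{V(\nabla u)-V(\nabla h)}^2\dz$, while for the time-slice term the identity $\abs{\Qr{}}/(\abs{B_r(x)}r^2)=\lambda_r^{2-p}$ turns $\int_{B_r(x)}\abs{u-h}^2(t)\dy$ into $\lambda_r^{p-2}$ times $\dashint_{B_r(x)}\abs{u-h}^2(t)/r^2\dy$, producing exactly the weight claimed in the statement.

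\textbf{Main obstacle.} The delicate step is the handling of the shifted N-function in \eqref{eq:hammer2}: in the super-quadratic regime the exponent $p'-2$ is non-positive, so the shift may be dropped at no cost, and a shifted Young inequality collapses to a plain $L^{p'}$ oscillation of $g$ that John--Nirenberg converts into the $\setBMO$ norm. This monotonicity is precisely what fails when $1<p<2$, which is consistent with the author's remark that the sub-quadratic case requires a genuinely different argument. A minor technical subtlety is justifying the use of $u-h$ as a test function and deriving the pointwise-in-time $L^2$ bound, which is standard once Steklov averages are in place.
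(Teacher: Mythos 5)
Your proposal is correct and follows essentially the same route as the paper: test the difference equation with $u-h$, handle the elliptic term via \eqref{eq:hammer} and the right-hand side via \eqref{eq:hammer2}, drop the shift because $p'-2\leq0$, absorb the $\delta$-term, and close with John--Nirenberg. The only (cosmetic) difference is that you place the shift on $\abs{\nabla h}$ where the paper places it on $\abs{\nabla u}$; both are valid instances of \eqref{eq:hammer2} and both shifts are dropped at the same step.
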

\begin{proof}
 We take $u-h$ as a test function for both systems \eqref{eq} and \eqref{eq:hom}. We take the difference and find
\begin{align*}
 &\dashint_\Qr{}\partial_t\frac{\abs{u-h}^2}{2}\dz+\dashint_\Qr{}(\abs{\nabla u}^{p-2}\nabla u-\abs{\nabla h}^{p-2}\nabla h)\cdot \nabla(u-h)\dz\\
\quad&=
\dashint_\Qr{}g\cdot \nabla(u-h)\dy\dtau=\dashint_\Qr{}(g-\mean{g(\tau)}_{B_r})\cdot \nabla(u-h)\dy\dtau.
\end{align*}
We find by \eqref{eq:hammer}, \eqref{eq:hammer2} and as $p'\leq 2$
\begin{align*}
\begin{aligned}
\lambda_r^{p-2}\dashint_{B_r}&\frac{\abs{u-h}^2(t)}{r^2}\dy +
\dashint_{\Qr{}}\abs{V(\nabla u)-V(\nabla h)}^2\dz\\
&\leq c\dashint_{\Qr{}}\Big(\abs{\nabla u}+\abs{g-\mean{g(\tau)}_{B_r(x)}}\Big)^{p'-2}\abs{g-\mean{g(\tau)}_{B_r(x)}}^2\dy\dtau\\
&\quad+\delta \dashint_{\Qr{}}\abs{V(\nabla u)-V(\nabla h)}^2\dz\\
&\leq c\dashint_{t-\lambda_r^{2-p}r^2}^t\dashint_{B_r(x)}\abs{g-\mean{g(\tau)}_{B_r(x)}}^{p'}\dy\dtau
+\delta \dashint_{\Qr{}}\abs{V(\nabla u)-V(\nabla h)}^2\dz.
\end{aligned}
\end{align*}
We absorb and use John-Nirenberg to find that
\[
 \dashint_{B_r(x)}\abs{g-\mean{g(\tau)}_{B_r}}^{p'}\dx\leq c\norm{g(\tau)}_{\setBMO(B_r(x))}^{p'},
\]
 which leads to the result.
\end{proof}

\begin{proposition}
 \label{pro:comp}
Let $Q^{\lambda_0}_R$ be intrinsic and $r\in(0,R)$ and $g\in L^\infty(I,\setBMO(B))$. Let $\beta\leq\frac{\alpha}{1+\alpha\frac{p-2}2}$, such that $\beta<\frac{2}{p-2}$, where $\alpha$ is defined by Theorem~\ref{thm:decay1}. 
Then there exist $K,c>1$ depending only on $n,N,p,\beta$, such that one of the following two alternatives holds:

Case 1: $\lambda_r^p\leq K\normg^{p'}$

Case 2: For the p-caloric comparison  function $h$ of \eqref{eq:hom} there exist a $\rho\in [r,R]$ such that
\begin{align*}
   \osc{V(\nabla h)}{\Qr{\sigma}}^2\leq c&\Big(\frac{\sigma r}{\rho}\Big)^\beta\dashint_{Q_{\rho}^{\lambda_\rho}}
\abs{V(\nabla u)-\mean{V(\nabla u)}_{Q_{\rho}^{\lambda_{\rho}}}}^2\dz\\
&+c\sigma^\beta\normg^{p'}
\end{align*}
for every $\sigma\in (0,\delta]$ and $Q^{\lambda_{ r}}_{r}$ defined by Lemma~\ref{lem:scal}. The constant $\delta\in (0,1)$ only depends on $n,N,p$. 
\end{proposition}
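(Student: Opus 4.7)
My plan is a dichotomy followed by a decay-by-comparison argument on a carefully chosen intrinsic cube. First I would fix a large constant $K > 1$ (to be determined below) and relate the given $\beta$ to the scaling parameter of Lemma~\ref{lem:scal} by choosing $b := 2\beta/\alpha \in (0,2)$, where $\alpha$ is the decay exponent of Theorem~\ref{thm:decay1}. The assumptions $\beta \leq \alpha/(1 + \alpha(p-2)/2)$ and $\beta < 2/(p-2)$ are precisely what is needed to guarantee $b \in (0,2)$ and that the companion exponent $\beta_0 := (2-b)/(p-2)$ of Lemma~\ref{lem:scal}(e)--(f) lies in the admissible range. If $\lambda_r^p \leq K\norm{g}_{L^\infty(I,\setBMO(B))}^{p'}$, Case~1 of the proposition holds; otherwise I proceed under $\lambda_r^p > K\norm{g}^{p'}$.

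To locate $\rho$, I would use that $Q_R^{\lambda_0}$ is intrinsic by hypothesis, so the infimum $\rho := \inf\{\tilde\rho \in [r,R] : Q_{\tilde\rho}^{\lambda_{\tilde\rho}} \text{ is intrinsic}\}$ is well-defined and belongs to $[r,R]$. Continuity of $s$ (Lemma~\ref{lem:scal}(b)) then yields that $Q_\rho^{\lambda_\rho}$ is itself intrinsic, while every $Q_{\tilde\rho}^{\lambda_{\tilde\rho}}$ with $\tilde\rho \in [r,\rho)$ is strictly sub-intrinsic; Lemma~\ref{lem:scal}(e)--(f) give $\lambda_\rho \geq \lambda_r$, and in particular $\lambda_\rho^p > K\norm{g}^{p'}$ as well.

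Next I would let $h$ denote the p-caloric comparison on $Q_\rho^{\lambda_\rho}$, i.e.\ \eqref{eq:hom} read at scale $\rho$. Lemma~\ref{eq:comparison} supplies $\dashint_{Q_\rho^{\lambda_\rho}}\abs{V(\nabla u)-V(\nabla h)}^2\dz \leq c\norm{g}^{p'}$; combined with \eqref{eq:nervig}, the intrinsicness of $Q_\rho^{\lambda_\rho}$ for $u$, and $\lambda_\rho^p > K\norm{g}^{p'}$ (for $K$ large), this shows that $Q_\rho^{\lambda_\rho}$ is $K'$-intrinsic for $h$ with $K'$ depending only on $n,N,p$. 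Theorem~\ref{thm:decay} then applies and gives, for every $\theta \in (0,\tau]$,
\[
 \osc{V(\nabla h)}{\theta Q_\rho^{\lambda_\rho}}^2 \leq c\theta^\alpha \dashint_{Q_\rho^{\lambda_\rho}} \abs{V(\nabla h) - \mean{V(\nabla h)}_{Q_\rho^{\lambda_\rho}}}^2 \dz \leq c\theta^\alpha\bigl(\Phi_u(\rho)^2 + \norm{g}^{p'}\bigr),
\]
where $\Phi_u(\rho)^2 := \dashint_{Q_\rho^{\lambda_\rho}} \abs{V(\nabla u) - \mean{V(\nabla u)}_{Q_\rho^{\lambda_\rho}}}^2 \dz$ and the last step combines the best-constant property with the comparison estimate.

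To close, I would choose $\theta := (\sigma r/\rho)^{b/2}$, which gives $\theta^\alpha = (\sigma r/\rho)^\beta$. The containment $\Qr{\sigma} \subset \theta Q_\rho^{\lambda_\rho}$ follows spatially from $b/2 < 1$ and $\sigma r/\rho \leq 1$, and temporally from the monotonicity $s(\sigma r) \leq (\sigma r/\rho)^b s(\rho)$ in Lemma~\ref{lem:scal}(b). Taking $\delta$ small enough ensures $\theta \leq \tau$ for $\sigma \in (0,\delta]$, and bounding $\osc{V(\nabla h)}{\Qr{\sigma}}^2 \leq \osc{V(\nabla h)}{\theta Q_\rho^{\lambda_\rho}}^2$ and using $(r/\rho)^\beta \leq 1$ to absorb the error term into $c\sigma^\beta\norm{g}^{p'}$ will deliver the claimed estimate. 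I expect the hardest step to be the scale bookkeeping in the containment argument: converting the fixed decay exponent $\alpha$ from Theorem~\ref{thm:decay} into the desired $\beta$ via the right choice of $b$, and fitting the sub-intrinsic $\Qr{\sigma}$ inside a standard $\theta$-dilation of $Q_\rho^{\lambda_\rho}$ both rely on the delicate monotonicity properties constructed in Lemma~\ref{lem:scal}.
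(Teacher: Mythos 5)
Your proof has a genuine gap: you define the comparison function $h$ as the $p$-caloric function on $Q_\rho^{\lambda_\rho}$, but the $h$ in the statement of Proposition~\ref{pro:comp} is "the $p$-caloric comparison function $h$ of \eqref{eq:hom}," i.e.\ the one solving the Dirichlet problem on $\Qr{}$ with $u$ as boundary data. These are different functions (the scale-$r$ comparison is not even defined on all of $Q_\rho^{\lambda_\rho}$), so the oscillation estimate you derive is for the wrong object. This is not a cosmetic distinction: the proposition is used in Lemma~\ref{pro:int} and Theorem~\ref{thm:main} together with Lemma~\ref{eq:comparison} at the scale $r$ — one needs the \emph{same} $h$ for both the comparison bound $\dashint_{\Qr{}}\abs{V(\nabla u)-V(\nabla h)}^2\dz\leq c\normg^{p'}$ and the oscillation bound. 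If $h$ were the comparison at scale $\rho$, restricting the comparison estimate from $Q_\rho^{\lambda_\rho}$ down to $\Qr{}$ costs a volume ratio $\abs{Q_\rho^{\lambda_\rho}}/\abs{\Qr{}}$, which is unbounded when $\rho\gg r$, so the downstream argument would break.

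The paper keeps $h$ at scale $r$ throughout, and therefore must work harder to land on $\dashint_{Q_\rho^{\lambda_\rho}}\abs{V(\nabla u)-\mean{V(\nabla u)}_{Q_\rho^{\lambda_\rho}}}^2\dz$. It first applies the \emph{sub-intrinsic} decay Theorem~\ref{thm:decay1} on $\Qr{}$ to get $\osc{V(\nabla h)}{\theta\Qr{}}^2\leq c\theta^\alpha\lambda_r^p$, and then splits into cases according to $\rho := \min\set{a\geq r : Q_a^{\lambda_a}\text{ intrinsic}}$. If $\rho/2>r$ (or $\mean{\abs{\nabla u}^p}_{\Qr{}}\leq\frac12\mean{\abs{\nabla u}^p}_{\Qrho{}}$), then Lemma~\ref{lem:scal}\,(f) and Lemma~\ref{lem:osc} yield $\lambda_r^p\leq c(r/\rho)^\beta\dashint_{Q_\rho^{\lambda_\rho}}\abs{V(\nabla u)-\mean{V(\nabla u)}_{Q_\rho^{\lambda_\rho}}}^2\dz$, which produces the $(r/\rho)^\beta$ factor for you; if instead $r\leq\rho\leq 2r$ with comparable averages, the comparison estimate and the failure of Case~1 imply $\Qr{}$ is $K$-intrinsic for $h$, so Theorem~\ref{thm:decay} applies directly \emph{at scale} $r$. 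Your proposal skips this dichotomy entirely and never invokes Theorem~\ref{thm:decay1} — the sub-intrinsic version — which is essential when $\Qr{}$ is far from intrinsic. To repair your proof you must keep $h$ on $\Qr{}$ and reintroduce this two-case analysis; the clean single-scale argument you outline cannot be made to produce the statement as written.
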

   \begin{proof}
Suppose Case~1 does not hold.  
We find for $\epsilon=\frac{1}{K}$ 
\begin{align}
\label{eq:gsup}
\normg^{p'} \leq  \epsilon\lambda_r^p.
\end{align}
Now let $h$ be the solution of \eqref{eq:hom} on $\Qr{}$, then
Lemma~\ref{eq:comparison} implies
\begin{align}
\label{eq:hsup}
 \dashint_{\Qr{}}\abs{\nabla h }^p\dz\leq 2^p\dashint_{\Qr{}}\abs{\nabla u}^p\dz+c\normg^{p'}\leq c\lambda_r^p.
\end{align}
We therefore can apply Theorem~\ref{thm:decay1} and find for $\theta<\frac14$
\begin{align}
\label{eq:decay2}
 \osc{V(\nabla h)}{\theta\Qr{}}^2\leq c\theta^\alpha\lambda_r^p.
\end{align}
We define 
\begin{align}
\label{eq:rho}
 \rho:=\min\set{a\geq r|Q_a^{\lambda_{a}}\text{ is }\text{intrinsic}}.
\end{align}
By construction $\rho\leq R$ exists as  $Q^{\lambda_0}_R$ is intrinsic. Moreover, (see Lemma~\ref{lem:scal},\ref{scal:6}), we find that $\lambda_a\leq(\frac{a}{\rho})^\beta\lambda_\rho$ for every $r\leq a\leq \rho$. 

If $\frac{\rho}2>r$, we find
\[
 \mean{\abs{\nabla u}^p}_{\Qrho{\frac12}}\leq \lambda_{\frac12\rho}^p\leq \frac1{2^\beta}\mean{\abs{\nabla u}^p}_{\Qrho{}}.
\]
 Therefore Lemma~\ref{lem:osc} implies
\begin{align}
\label{eq:decay1}
 \lambda_r^p\leq c\Big(\frac{r}{\rho}\Big)^\beta\lambda_\rho^p\leq c\Big(\frac{r}{\rho}\Big)^\beta \dashint_{Q_{\rho}^{\lambda_\rho}}
\abs{V(\nabla u)-\mean{V(\nabla u)}_{Q_{\rho}^{\lambda_{\rho}}}}^2\dz.
\end{align}
If $ \frac\rho2\leq r\leq \rho$, we either find that
\[
 \mean{\abs{\nabla u}^p}_{\Qr{}}\leq \frac 12 \mean{\abs{\nabla u}^p}_{\Qrho{}}
\]
in which case we have \eqref{eq:decay1} again by Lemma~\ref{lem:osc}. Otherwise we have
\[
 \mean{\abs{\nabla u}^p}_{\Qr{}}> \frac 12 \mean{\abs{\nabla u}^p}_{\Qrho{}}\geq  c\lambda_r^p.
\]
We find by Lemma~\ref{eq:comparison} and as Case~1 does not hold
\begin{align}
\label{eq:uequivh}
\begin{aligned}
 \lambda_r^p\leq c\dashint_{\Qr{}}\abs{\nabla u}^p&\leq c\dashint_{\Qr{}}\abs{\nabla h }^p +c\normg^{p'}\\
  &\leq c\dashint_{ \Qr{}}\abs{\nabla h}^p
    +{c\epsilon} \lambda_r^p.
  \end{aligned}
\end{align}
We gain (if $\epsilon$ is small enough) by the previous combined with \eqref{eq:hsup} 
\begin{align}
\label{eq:hlow}
\lambda_r^p\sim\dashint_{\Qr{}}\abs{\nabla h }^p\dz.
\end{align}
Now we can apply Theorem~\ref{thm:decay}. This implies together with Lemma~\ref{eq:comparison} for $\theta\in (0,\tau)$
\[
 \osc{V(\nabla h)}{\theta\Qr{}}^2\leq c\theta^\alpha\!\! \dashint_{Q_{r}^{\lambda_r}}
\abs{V(\nabla u)-\mean{V(\nabla u)}_{\Qr{}}}^2\dz+c\theta^\alpha\normg^{p'}.
\]
Combining the last estimate with \eqref{eq:decay2} and \eqref{eq:decay1} we find
\begin{align*}
 \osc{V(\nabla h)}{\theta\Qr{}}^2\leq &c\theta^\alpha \Big(\frac{r}{\rho}\Big)^\beta \dashint_{\Qrho{}}
\abs{V(\nabla u)-\mean{V(\nabla u)}_{\Qrho{}}}^2\dz\\
&+c\theta^\alpha\normg^{p'}.
\end{align*}
To conclude the proof we use Lemma~\ref{lem:scal}, \ref{scal:7}: For $\sigma^\frac{b}{2}=\theta$ we have $\Qr{\sigma}\subset \theta \Qr{}$, therefore
 \begin{align*}
 &\osc{V(\nabla h)}{\Qr{\sigma}}^2\\
&\quad\leq c\sigma^\frac{b\alpha}{2} \Big(\frac{r}{\rho}\Big)^\beta \dashint_{\Qrho{}}
\abs{V(\nabla u)-\mean{V(\nabla u)}_{\Qrho{}}}^2\dz+c\sigma^\frac{b\alpha}{2} \normg^{p'}\\
&\quad\leq c\Big(\frac{\sigma r}{\rho}\Big)^\beta \dashint_{\Qrho{}}
\abs{V(\nabla u)-\mean{V(\nabla u)}_{\Qrho{}}}^2\dz+c\sigma^\beta\normg^{p'}
\end{align*}
by the choice of $\beta=\frac{2-b}{p-2}\leq\frac{b \alpha }{2}$ by our assumptions on $\beta$.
   \end{proof}

\subsection{An intrinsic BMO result}
\label{sec:bmo}

The next proposition gives an intrinsic BMO estimate. We will prove it for the refined spaces $\setBMO_\omega$. In the following let $\omega:[0,\infty)\to [0,\infty)$ be almost increasing. Moreover, 

\begin{align}
 \label{eq:omega}
\frac{\omega(r)}{\omega(\sigma r)}\leq  c_1\sigma^\frac{-\gamma}{p}\text{ for }\sigma\in(0,1] \text{ where }
\gamma<\min\Bigset{\frac{\alpha}{1+\alpha\frac{p-2}2},\frac2{p-2}}.
\end{align}

\begin{lemma}
\label{pro:int}
Let $Q^{\lambda_0}_R$ be intrinsic, $\omega$ hold \eqref{eq:omega} and $g\in L^\infty(I,\setBMO_{\omega'}(B))$, with $\omega'\equiv\omega^{p-1}$. Then there exist constants $c,\beta$ depending on $\gamma, c_1,n,N,p$ such that
\begin{align*}
\sup\limits_{0<r<R}\frac{1}{\omega^p(r)}\dashint_{ \Qr{}}\abs{V(\nabla u)-\mean{V(\nabla u)}_{\Qr{}}}^2
&\leq
 c\normgw{r}^{p'}\\
&+ \frac{c}{\omega^p(R)}\dashint_{ Q^{\lambda_{ 0}}_{ R}}\abs{V(\nabla u)-\mean{V(\nabla u)}_{Q^{\lambda_{ 0}}_{ R}}}^2,
\end{align*}
where $\Qr{}$ is defined by Lemma~\ref{lem:scal} for a $\beta>\gamma$ fixed.
\end{lemma}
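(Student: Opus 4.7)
The plan is to iterate Proposition~\ref{pro:comp} and absorb, using the weight condition~\eqref{eq:omega} to balance the $p$-caloric decay against the blow-up of $\omega^p$. Abbreviate
\[
\Psi(r):=\dashint_{\Qr{}}\abs{V(\nabla u)-\mean{V(\nabla u)}_{\Qr{}}}^2\dz,\qquad F(r):=\Psi(r)/\omega^p(r),
\]
and fix $\beta\in(\gamma,\min\{\alpha/(1+\alpha(p-2)/2),2/(p-2)\})$ --- possible thanks to \eqref{eq:omega} --- to be used both in Lemma~\ref{lem:scal} to produce the family $\Qr{}$ and in Proposition~\ref{pro:comp}. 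The goal is to bound $G(r_0):=\sup_{r\in[r_0,R]}F(r)$ uniformly in $r_0>0$; at the end we let $r_0\downarrow 0$.

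The recursion is obtained by applying Proposition~\ref{pro:comp} to the enlarged cylinder $Q^{\lambda_{r/\sigma}}_{r/\sigma}$ for $r\in(0,\sigma R]$ and a small $\sigma\in(0,\delta]$ fixed later. In Case~1, $\lambda_{r/\sigma}^p\leq K\norm{g}_{L^\infty(I,\setBMO(B_{r/\sigma}))}^{p'}\leq cK\omega^p(r/\sigma)\normgw{r/\sigma}^{p'}$ via $(\omega')^{p'}=\omega^p$; sub-intrinsicity gives $\Psi(r/\sigma)\leq c\lambda_{r/\sigma}^p$, and the best constant property combined with the volume ratio $\abs{Q^{\lambda_{r/\sigma}}_{r/\sigma}}/\abs{\Qr{}}\leq c\sigma^{-(n+2)p/2}$ from Lemma~\ref{lem:scal}\ref{scal:4} yields $F(r)\leq C(\sigma)\normgw{r/\sigma}^{p'}$. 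In Case~2 there is $\rho\in[r/\sigma,R]$ and a $p$-caloric replacement $h$ on $Q^{\lambda_{r/\sigma}}_{r/\sigma}$ with
\[
\osc{V(\nabla h)}{\Qr{}}^2\leq c\Bigl(\tfrac{r}{\rho}\Bigr)^\beta\Psi(\rho)+c\sigma^\beta\omega^p(r/\sigma)\normgw{r/\sigma}^{p'}.
\]
The best constant property and Lemma~\ref{eq:comparison} on $Q^{\lambda_{r/\sigma}}_{r/\sigma}$ (combined with the same volume ratio for the comparison error) give
\[
\Psi(r)\leq c(r/\rho)^\beta\Psi(\rho)+c\bigl(\sigma^\beta+\sigma^{-(n+2)p/2}\bigr)\omega^p(r/\sigma)\normgw{r/\sigma}^{p'}.
\]
Dividing by $\omega^p(r)$ and using \eqref{eq:omega} in the forms $(\omega(\rho)/\omega(r))^p\leq c_1^p(\rho/r)^\gamma$ and $(\omega(r/\sigma)/\omega(r))^p\leq c_1^p\sigma^{-\gamma}$, together with $r/\rho\leq\sigma$, this collapses to
\[
F(r)\leq cc_1^p\sigma^{\beta-\gamma}F(\rho)+C(\sigma)\normgw{r/\sigma}^{p'}.
\]

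For the boundary range $r\in[\sigma R,R]$, the best constant property with mean on $Q^{\lambda_0}_R$, the bounded ratio $\abs{Q^{\lambda_0}_R}/\abs{\Qr{}}\leq C(\sigma)$, and \eqref{eq:omega} give $F(r)\leq C(\sigma)\,\Psi(R)/\omega^p(R)$. Now choose $\sigma$ small enough that $cc_1^p\sigma^{\beta-\gamma}\leq\tfrac12$; since in every Case~2 application $\rho\geq r\geq r_0$, we have $F(\rho)\leq G(r_0)$. Combining the three cases and taking the supremum over $r\in[r_0,R]$,
\[
G(r_0)\leq\tfrac12 G(r_0)+C(\sigma)\bigl(\Psi(R)/\omega^p(R)+\normgw{R}^{p'}\bigr),
\]
and $G(r_0)$ is finite for $r_0>0$ since $\nabla u\in L^p$. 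Absorbing the $\tfrac12 G(r_0)$ and letting $r_0\downarrow 0$ yields the claim (using $\normgw{r}\leq\normgw{R}$ for $r\leq R$).

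The main technical obstacle is Case~2: the decay $(r/\rho)^\beta$ coming from Theorem~\ref{thm:decay} via Proposition~\ref{pro:comp} must overcome the weight blow-up $(\omega(\rho)/\omega(r))^p\leq c_1^p(\rho/r)^\gamma$, which is possible precisely because of $\beta>\gamma$, equivalently the upper bound on $\gamma$ in \eqref{eq:omega}. The bad factor $\sigma^{-(n+2)p/2}$ from transferring the $L^2$-comparison estimate from the large cylinder $Q^{\lambda_{r/\sigma}}_{r/\sigma}$ down to $\Qr{}$ is innocuous because $\sigma$ is fixed once and for all, so it is swallowed into $C(\sigma)$.
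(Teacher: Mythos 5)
Your proposal is correct and follows essentially the same route as the paper: apply Proposition~\ref{pro:comp} on the enlarged cylinder, split into the two alternatives, use the caloric decay with exponent $\beta>\gamma$ from \eqref{eq:omega} to beat the weight, handle the boundary range $r\in[\sigma R,R]$ by the volume ratio from Lemma~\ref{lem:scal}\ref{scal:4}, fix $\sigma$ to absorb, and let $r_0\downarrow 0$. The only cosmetic difference is that you apply Proposition~\ref{pro:comp} at scale $r/\sigma$ and descend to $\Qr{}$, while the paper applies it at scale $r$ and descends to $\Qr{\sigma}$ — a relabeling, not a change of argument.
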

\begin{proof}
We fix $\gamma<\beta<\min\Bigset{\frac{\alpha}{1+\alpha\frac{p-2}2},\frac2{p-2}}$. Now we take $\sigma\in (0,1)$. We will define the size of $\sigma$ in the end of the proof. If $r\geq\sigma R$, we find by Lemma~\ref{lem:scal}, \ref{scal:4} 
\begin{align}
\label{bmo1}
  \frac 1{\omega^p(r)}\dashint_{ Q^{\lambda_{ r}}_{ r}}\abs{V(\nabla u)-\mean{V(\nabla u)}_{Q^{\lambda_{ r}}_{ r}}}^2\leq \frac {c(\sigma)}{\omega^p(R)}\dashint_{ Q^{\lambda_{ 0}}_{ R}}\abs{V(\nabla u)-\mean{V(\nabla u)}_{Q^{\lambda_{ 0}}_{ R}}}^2.
\end{align}
Now we will prove the estimate for $\sigma r\in(0,\sigma R]$. We apply Proposition~\ref{pro:comp} on the cylinder $\Qr{}$. If Case 1 holds, we find as $Q_{\sigma r}^{\lambda_{\sigma r}}$ is sub-intrinsic
\begin{align}
\label{bmo2}
\begin{aligned}
 \frac1{\omega^p(\sigma r)}\dashint_{Q_{\sigma r}^{\lambda_{\sigma r}}}&\abs{V(\nabla u)-\mean{V(\nabla u)}_{Q_{\sigma r}^{\lambda_{\sigma r}}}}^2\dz
\leq \frac{c(\sigma)}{\omega^p(r)}\lambda_r^p\\
&\leq  \frac{cK^p}{\omega^p(r)}\normg^{p'}
\leq c\normgw{r}^{p'}
\end{aligned}
\end{align}
where we used that $\omega$ is almost increasing and that $\omega'\equiv\omega^\frac{p}{p'}$.

If Case~2 of Proposition~\ref{pro:comp} holds, we find using the best constant property, Lemma~\ref{eq:comparison}, \eqref{eq:omega} and Lemma~\ref{lem:scal}~\ref{scal:4}
\begin{align}
\label{bmo3}
\begin{aligned}
& \frac{1}{\omega^p(\sigma r)}
\dashint_{\Qr{\sigma}}
\abs{V(\nabla u)-\mean{V(\nabla u)}}_{\Qr{\sigma}}^2\dz\\
&\quad\leq \frac{c}{\omega^p(\sigma r)}\dashint_{\Qr{\sigma}}
\abs{V(\nabla h)-\mean{V(\nabla h)}_{\Qr{\sigma}}}^2\dz
+\frac{c(\sigma)}{\omega^p(r)} \dashint_{\Qr{}}\abs{V(\nabla u)-V(\nabla h)}^2\dz\\
&\quad\leq \frac c{\omega^p(\sigma r)}\osc{V(\nabla h)}{\Qr{\sigma}}^2
+c\normgw{r}^{p'}.
\end{aligned}
\end{align}
By Proposition~\ref{pro:comp} and \eqref{eq:omega} we find for $\sigma\in (0, \delta)$ and $\rho\geq r$
\begin{align*}
 \frac 1{\omega^p(\sigma r)}\osc{V(\nabla h)}{\Qr{\sigma}}
&\leq \sigma^{\beta-\gamma} \frac c{\omega^p(\rho)} \dashint_{\Qrho{}}\abs{V(\nabla u)-\mean{V(\nabla u)}_{\Qrho{}}}^2\dz\\
&\quad + \sigma^{\beta-\gamma} \frac c{\omega^p(r)}\normg^{p'}
\end{align*}
Combining the last estimate with \eqref{bmo1},\eqref{bmo2} and \eqref{bmo3} leads to
\begin{align}
\label{eq:final}\begin{aligned}
\sup\limits_{a<r<R}\frac{1}{\omega(r)}
&\dashint_{ \Qr{}}\abs{V(\nabla u)-\mean{V(\nabla u)}_{\Qr{}}}^2\\
&\leq
 c\normgw{r}^{p'}
+ \frac{c}{\omega(R)}\dashint_{ Q^{\lambda_{ 0}}_{ R}}\abs{V(\nabla u)-\mean{V(\nabla u)}_{Q^{\lambda_{ 0}}_{ R}}}^2\\
&+c\sigma^{\beta-\gamma} \sup\limits_{\frac{a}\sigma<r<R}
\frac{1}{\omega(r)}\dashint_{ \Qr{}}\abs{V(\nabla u)-\mean{V(\nabla u)}_{\Qr{}}}^2.
\end{aligned}
\end{align}
Now fix $\sigma$ conveniently, such that we can absorb the last term. The result follows by $a\to 0$.
\end{proof}

%
%
 In Proposition~\ref{pro:intrbmo} we show the intrinsic BMO estimate. Before we need another lemma on cylinders.

\begin{lemma}
 \label{lem:cubes}
Let $Q^{\lambda_0}_R$ be sub-intrinsic. For every $z\in Q^{\lambda_0}_{R/2}$ there exist a sub-intrinsic cube $Q^{\lambda_{R/2}}_{R/2}(z)\subset Q_R^{\lambda_0}$ and $\lambda_{R/2}\sim\lambda_0$.

Let $Q_R=(t,t-R^2)\times B_R(x)$. Then for every $z\in Q_{R/2}$ there exists a sub-intrinsic cube $Q^{\lambda_{R/2}}_{R/2}(z)\subset Q_R$ and $\lambda_{R/2}\sim\max\bigset{\big(\dashint_{Q_R}\abs{\nabla u}^p\big)^\frac12,1}$.
\end{lemma}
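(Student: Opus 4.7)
The plan is a direct volume and scaling computation: pick $\lambda_{R/2}$ just large enough that, after shrinking the spatial radius from $R$ to $R/2$, the smaller cylinder still fits inside the ambient one and the sub-intrinsic inequality re-establishes itself. The one identity doing all the work is the volume ratio
\[
\frac{\abs{Q^{\lambda_0}_R}}{\abs{Q^{\lambda_{R/2}}_{R/2}(z)}}=2^{n+2}\Bigl(\frac{\lambda_{R/2}}{\lambda_0}\Bigr)^{p-2},
\]
which controls by how much a spatial $L^p$-average can blow up when one passes from the outer cylinder to a smaller one centred at $z$.

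For the first statement I would write $z=(t_z,x_z)$ with $(t_0,x_0)$ the centre of $Q^{\lambda_0}_R$ and first verify containment. Spatial inclusion $B_{R/2}(x_z)\subset B_R(x_0)$ is immediate. On the time axis the worst case is $t_z$ at the bottom of $Q^{\lambda_0}_{R/2}$, from which one reads off the requirement $\lambda_{R/2}^{2-p}\le 3\lambda_0^{2-p}$; for $p\ge 2$ this is automatic as soon as $\lambda_{R/2}\ge\lambda_0$. For the sub-intrinsic bound, the volume ratio above yields
\[
\dashint_{Q^{\lambda_{R/2}}_{R/2}(z)}\abs{\nabla u}^p\le 2^{n+2}\lambda_{R/2}^{p-2}\lambda_0^{2},
\]
so that the condition $\dashint_{Q^{\lambda_{R/2}}_{R/2}(z)}\abs{\nabla u}^p\le\lambda_{R/2}^p$ reduces to $\lambda_{R/2}\ge 2^{(n+2)/2}\lambda_0$. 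I would therefore set $\lambda_{R/2}:=2^{(n+2)/2}\lambda_0$, which gives all three required properties at once and the relation $\lambda_{R/2}\sim\lambda_0$ for free.

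For the second statement the argument is the same with $Q_R=Q^1_R$ taking the role of the ambient cylinder. Writing $\Lambda^p:=\dashint_{Q_R}\abs{\nabla u}^p$, the volume calculation with $\lambda_0=1$ becomes
\[
\dashint_{Q^{\lambda_{R/2}}_{R/2}(z)}\abs{\nabla u}^p\le 2^{n+2}\lambda_{R/2}^{p-2}\Lambda^p,
\]
so that sub-intrinsicness asks for $\lambda_{R/2}^2\ge 2^{n+2}\Lambda^p$, i.e.\ $\lambda_{R/2}\gtrsim\Lambda^{p/2}$; temporal containment against the ambient $Q_R$ needs $\lambda_{R/2}\ge 1$ (so that $\lambda_{R/2}^{2-p}\le 1$ for $p\ge 2$). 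I would therefore take $\lambda_{R/2}:=c\,\max\{\Lambda^{p/2},1\}$ for a suitable dimensional constant $c$, which matches the stated equivalence exactly.

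The only subtlety worth flagging is the asymmetric parabolic scaling: halving the spatial radius changes the time length by a factor $\lambda^{2-p}$, so both the dimensional factor $2^{n+2}$ and the intrinsic factor $(\lambda_{R/2}/\lambda_0)^{p-2}$ appear in the sub-intrinsic bookkeeping, and they have to be balanced simultaneously to land at $\lambda_{R/2}\sim\lambda_0$ rather than at a larger blow-up. Beyond this the verification is routine and I foresee no further obstacle.
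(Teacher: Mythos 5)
Your proof is correct and uses the same volume-ratio argument as the paper, though for the second statement you work directly against the ambient $Q_R$ rather than first producing a sub-intrinsic $Q^{\lambda_0}_R$ and then reducing to the first statement as the paper does. A small bonus: the threshold $\lambda_{R/2}=2^{(n+2)/2}\lambda_0$ you derive is the arithmetically correct one, whereas the paper's $2^{(n+2)/(p-2)}\lambda_0$ appears to be a slip that fails to guarantee sub-intrinsicity when $p>4$ — but since both are comparable to $\lambda_0$ the conclusion $\lambda_{R/2}\sim\lambda_0$ is unaffected.
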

\begin{proof}
We start with the first statement. Since $Q^{\lambda_0}_R$ is sub-intrinsic we find for fixed $z\in Q_\frac{R}2^{\lambda_0}$
\[
\frac{1}{\abs{Q^{\lambda_0}_R}}\int_{Q_{\frac R2}^{\lambda_0}(z)}\abs{\nabla u}^p\leq \lambda_0^p.
\]
Hence, for  $2^\frac{n+2}{p-2}\lambda_0=\lambda_{R/2}\geq\lambda_0$ we find
\[
 \bigg(\dashint_{Q_{\frac R2}^{\lambda_\frac{R}2}(z)}\abs{\nabla u}^p\bigg)^\frac1p\leq \lambda_{R/2}\leq 2^\frac{n+2}{p-2}\lambda_0.
\]
To prove the second statement we define $\tilde{\lambda}_0$ by $\dashint_{Q_R}\abs{\nabla u}^p=\tilde{\lambda}_0^2$. If $\tilde{\lambda}_0\leq 1$, then $\dashint_{Q_R}\abs{\nabla u}^p\leq 1^p$, in this case we define $\lambda_0=1$. If $\tilde{\lambda}_0\geq 1$ (and $\tilde{\lambda}_0^{2-p}\leq 1$), we define $\lambda_0=\tilde{\lambda}_0$ and find for any $Q_R^{\lambda_0}(t):=(t,t-\lambda_0^{2-p}R^2)\times B_R \subset Q_R$ that 
$\dashint_{Q_R^{\lambda_0}(t)}\abs{\nabla u}^p\leq \lambda_0^p$. Now we gain the result by proceeding as before.
\end{proof}

\begin{proposition}
\label{pro:intrbmo}
Let $Q_R^{\lambda_0}$ be sub-intrinsic, $\omega$ hold \eqref{eq:omega} and $g\in L^\infty(I,\setBMO_{\omega'}(B))$, with $\omega'\equiv \omega^{p-1}$. Then there exist a constant $c,\beta$ depending on $\gamma,c_1,n,p,N$ such that
\begin{align*}
 &\sup\limits_{z\in Q_\frac{R}2^{\lambda_0}}\sup\limits_{r<\frac{R}2}\frac{1}{\omega(r)}\bigg(\dashint_{Q_r^{\lambda_r}(z)}\abs{V(\nabla u)-\mean{V(\nabla u)}_{Q_{r}^{\lambda_r}(z)}}^2\bigg)^\frac1p\\
&\qquad\leq c\normgw{R}^\frac1{p-1}
+\frac{c\lambda_0}{\omega(R)}
\end{align*}
where $Q^{\lambda_{R/2}}_{R/2}(z)$ is defined by Lemma~\ref{lem:cubes} and $\Qr{}(z)\subset Q^{\lambda_{R/2}}_{R/2}(z)$ is defined by Lemma~\ref{lem:scal} for $\beta>\gamma$ fixed.
\end{proposition}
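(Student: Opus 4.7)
The plan is to reduce this sub-intrinsic statement to the intrinsic-starting-cube version of Lemma~\ref{pro:int} by locating a suitable intrinsic inner cylinder. Fix $z\in Q_{R/2}^{\lambda_0}$ and invoke Lemma~\ref{lem:cubes} to obtain a sub-intrinsic cylinder $Q_{R/2}^{\lambda_{R/2}}(z)\subset Q_R^{\lambda_0}$ with $\lambda_{R/2}\sim\lambda_0$. Apply Lemma~\ref{lem:scal} to this starting cylinder with parameter $b$ chosen so that the decay exponent $\beta=(2-b)/(p-2)$ in \ref{scal:6} satisfies $\beta>\gamma$; this produces the family $\{Q_r^{\lambda_r}(z)\}_{r\in(0,R/2]}$ of sub-intrinsic cubes appearing in the statement.

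Set $R_1:=\sup\set{a\in(0,R/2]\,:\,Q_a^{\lambda_a}(z)\text{ is intrinsic}}$ with the convention $R_1=0$ if the set is empty. By construction every $Q_r^{\lambda_r}(z)$ with $r\in(R_1,R/2]$ is strictly sub-intrinsic, so Lemma~\ref{lem:scal},\ref{scal:6} yields $\lambda_r\le c(r/R)^\beta\lambda_0$. Combined with the sub-intrinsic bound $\dashint_{Q_r^{\lambda_r}(z)}\abs{V(\nabla u)-\mean{V(\nabla u)}}^2\le c\lambda_r^p$ and the growth condition \eqref{eq:omega} (which, since $\beta p>\gamma$ as $\beta>\gamma$ and $p\ge 2$, gives $(r/R)^{\beta p}\omega(r)^{-p}\le c\omega(R)^{-p}$), this yields the claimed bound $\omega(r)^{-p}\dashint_{Q_r^{\lambda_r}(z)}\abs{V(\nabla u)-\mean{V(\nabla u)}}^2\le c(\lambda_0/\omega(R))^p$ throughout the range $r\in(R_1,R/2]$.

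For $r\in(0,R_1]$ (only relevant when $R_1>0$) the cube $Q_{R_1}^{\lambda_{R_1}}(z)$ is intrinsic, so Lemma~\ref{pro:int} applied to it produces
\[
 \omega(r)^{-p}\dashint_{Q_r^{\lambda_r}(z)}\abs{V(\nabla u)-\mean{V(\nabla u)}}^2\le c\normgw{R_1}^{p'}+c\omega(R_1)^{-p}\dashint_{Q_{R_1}^{\lambda_{R_1}}(z)}\abs{V(\nabla u)-\mean{V(\nabla u)}}^2.
\]
Intrinsicness of $Q_{R_1}^{\lambda_{R_1}}(z)$ bounds the last integral by $c\lambda_{R_1}^p$, and passing to the limit $r\downarrow R_1$ in the previous paragraph (or applying \ref{scal:6} directly on $[R_1,R/2]$) produces $\lambda_{R_1}\le c(R_1/R)^\beta\lambda_0$; using \eqref{eq:omega} exactly as before and the monotonicity $\normgw{R_1}\le\normgw{R}$ closes this case. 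A supremum over $z$ and $r$ followed by a $p$-th root yields the stated estimate.

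The delicate point is the geometric step that replaces the merely sub-intrinsic cylinder $Q_{R/2}^{\lambda_{R/2}}(z)$ by an intrinsic inner cylinder $Q_{R_1}^{\lambda_{R_1}}(z)$ to which Lemma~\ref{pro:int} can be applied, together with the verification that $\lambda_{R_1}\le c(R_1/R)^\beta\lambda_0$. Lemma~\ref{lem:scal},\ref{scal:6}, which provides the power-law decay of $\lambda_r$ along strictly sub-intrinsic chains, is precisely the tool that bridges this gap; without it the proof of Lemma~\ref{pro:int} (which routes through the definition \eqref{eq:rho} in Proposition~\ref{pro:comp}) would not be directly available in the present setting.
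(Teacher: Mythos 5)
Your proposal is correct and follows essentially the same route as the paper's own proof: both define $\rho$ (your $R_1$) as the supremum of radii at which $Q_a^{\lambda_a}(z)$ is intrinsic, treat the range $[\rho,R/2]$ directly via sub-intrinsicity, Lemma~\ref{lem:scal}~\ref{scal:6}, and \eqref{eq:omega}, and for $r\le\rho$ invoke Lemma~\ref{pro:int} with the intrinsic cube $Q_\rho^{\lambda_\rho}(z)$ as the starting cylinder, then bound its contribution by $c\lambda_0^p/\omega^p(R)$ using the same decay. The only differences are cosmetic (explicit handling of $R_1=0$, and the slightly sharper observation that $\beta p>\gamma$ suffices).
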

\begin{proof}
We fix $\rho :=\sup\{a<\frac R2|Q^{\lambda_{a}}_a(z)\text{ is intrinsic}\}$. By \ref{scal:6} of Lemma~\ref{lem:scal}, \eqref{eq:omega} and Lemma~\ref{lem:cubes} we find for $\rho\leq r\leq\frac R2$
 \[
  \frac 1{\omega^p(r)}\!\!\!\dashint_{ Q^{\lambda_{ r}}_{ r}(z)}\!\!\!\abs{V(\nabla u)-\mean{V(\nabla u)}_{Q^{\lambda_{ r}}_{ r}}(z)}^2\leq \!\frac{c\lambda_r^p}{\omega^p(r)}\leq c(\sigma)\frac{r^\beta\omega^p(R)}{R^\beta\omega^p(r)}\frac{\lambda_{R/2}^p}{\omega^p(R)}\leq \!\!\frac{c\lambda_0^p}{\omega^p(R)}.
 \]
For $r\leq \rho$ we can apply Lemma~\ref{pro:int} and find by the previous that
\begin{align*}
\frac{1}{\omega^p(r)}
&\dashint_{ \Qr{}(z)}\abs{V(\nabla u)-\mean{V(\nabla u)}_{\Qr{}(z)}}^2\\
&\leq c\normgw{R}^{p'}
+
\frac{c}{\omega^p(\rho)}\dashint_{ Q^{\lambda_{\rho}}_{\rho}(z)}\abs{V(\nabla u)-\mean{V(\nabla u)}_{Q^{\lambda_{\rho}}_{\rho}(z)}}^2\\
&\leq c\normgw{R}^{p'} +\frac{c\lambda_0^p}{\omega^p(R)}.
\end{align*}
This finishes the proof.
\end{proof}
We can generalize this result by the following purely intrinsic result
\begin{corollary}
 \label{cor:pureint}
Let $Q^{\lambda_0}_R$ be sub-intrinsic, $\omega$ hold \eqref{eq:omega} and for every cube $\Qr{}(z)$ constructed as in Proposition~\ref{pro:intrbmo}
\[
\sup\limits_{z\in Q_{R}^{\lambda_0}}
\sup\limits_{r<{R}}
\frac{1}{\omega^p(r)}\dashint_{\Qr{}(z)}\abs{g-\mean{g}_{\Qr{}(z)}}^{p'}
=:\normm{g}^{p'}<\infty,
\]
then
there exist a constant $c,\beta$ depending on $\gamma, c_1,n,p$ such that
\begin{align*}
 &\sup\limits_{z\in Q_\frac{R}2^{\lambda_0}}\sup\limits_{r<\frac{R}2}\frac{1}{\omega^p(r)}\dashint_{Q_r^{\lambda_r}(z)}\abs{V(\nabla u)-\mean{V(\nabla u)}_{Q_{r}^{\lambda_r}(z)}}^2\\
&\qquad\leq c\normm{g}^{p'}
+\frac{c\lambda_0^p}{\omega^p(R)}. 
\end{align*}
\end{corollary}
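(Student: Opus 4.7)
The proof will track the argument of Proposition~\ref{pro:intrbmo} line by line, the only change being the estimate of the right-hand side in the comparison lemma. First I observe that in Lemma~\ref{eq:comparison} the integral $\dashint_{\Qr{}} g\cdot\nabla(u-h)\dz$ is invariant under replacement of $g$ by $g-c$ for \emph{any} constant vector $c\in\RNn$, because $u-h$ vanishes on the parabolic boundary of $\Qr{}$; the original proof uses $c=\mean{g(\tau)}_{B_r}$ and passes through John--Nirenberg, but the same Young-type estimate \eqref{eq:hammer2} works just as well with the time-independent choice $c=\mean{g}_{\Qr{}}$.

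Carrying out this substitution in the proof of Lemma~\ref{eq:comparison} yields the intrinsic comparison bound
\[
\lambda_r^{p-2}\dashint_{B_r}\frac{\abs{u-h}^2(t)}{r^2}\dy+\dashint_{\Qr{}}\abs{V(\nabla u)-V(\nabla h)}^2\dz
\leq c\dashint_{\Qr{}}\abs{g-\mean{g}_{\Qr{}}}^{p'}\dz\leq c\,\omega^p(r)\,\normm{g}^{p'},
\]
where the last inequality is exactly the hypothesis on $g$ (using $(p-1)p'=p$ so the factor $\omega^p(r)$ matches the denominator in the definition of $\normm{g}$). This is the purely intrinsic analogue of the bound $\dashint_{\Qr{}}|g-\mean{g(\tau)}_{B_r}|^{p'}\dz\leq c\norm{g}^{p'}_{L^\infty(I,\setBMO(B_r))}$ used previously.

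Next I re-examine the proofs of Proposition~\ref{pro:comp}, Lemma~\ref{pro:int} and Proposition~\ref{pro:intrbmo}. In each place where the previous versions introduce the quantity $\normgw{r}^{p'}$ (namely in \eqref{eq:gsup}, \eqref{eq:hsup}, \eqref{eq:uequivh}, \eqref{bmo2}, \eqref{bmo3} and the Case 1/Case 2 alternative), the only input used is precisely the comparison bound from Lemma~\ref{eq:comparison}. Replacing that bound by the intrinsic one above therefore leaves every subsequent estimate intact, with $\normgw{r}^{p'}$ replaced by $\normm{g}^{p'}$ throughout. In particular the iteration/absorption step \eqref{eq:final} and the final application of Lemma~\ref{lem:cubes} go through unchanged.

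The only point requiring care is the role played by the weight $\omega$: the factor $\omega^p(r)$ produced by the hypothesis on $g$ must cancel the $1/\omega^p(\sigma r)$ (resp.\ $1/\omega^p(r)$) appearing in front of the oscillation term, and this cancellation is exactly the condition \eqref{eq:omega} that was already exploited in the proof of Lemma~\ref{pro:int}. I therefore expect the main (very mild) obstacle to be purely bookkeeping: checking that the intrinsic hypothesis on $g$ is available on each cylinder $\Qr{}(z)$ visited by the iteration in Proposition~\ref{pro:comp} (it is, because those cylinders are precisely the ones over which $\normm{g}$ is defined). After these verifications, combining the modified versions of \eqref{bmo1}--\eqref{bmo3} with the final absorption in $\sigma$ as in Lemma~\ref{pro:int} yields the stated estimate.
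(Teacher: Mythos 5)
Your proposal is correct and follows essentially the same route the paper takes: the paper's own proof of Corollary~\ref{cor:pureint} is the one-line remark that one replaces $\norm{g}_{L^\infty(I,\setBMO(B_r))}$ by the intrinsic norm of $g$ in Lemma~\ref{eq:comparison}, Proposition~\ref{pro:comp} and Proposition~\ref{pro:intrbmo}, and everything else follows analogously. Your more detailed account---subtracting the space-time mean $\mean{g}_{\Qr{}}$ rather than the time-slice mean $\mean{g(\tau)}_{B_r}$ in the comparison estimate (valid since $\int_{B_r}\nabla(u-h)(\tau)\dy=0$ for each $\tau$), then propagating the modified bound through Proposition~\ref{pro:comp}, Lemma~\ref{pro:int} and Proposition~\ref{pro:intrbmo}---is precisely the bookkeeping the paper leaves implicit.
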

\begin{proof}
 One simply replaces $\norm{g}_{L^\infty(I,\setBMO(B_r))}$ by $\normm{g}$ in Lemma~\ref{eq:comparison}, Lemma~\ref{pro:comp} and Proposition~\ref{pro:intrbmo}. Anything else follows analogously.
\end{proof}
\subsection{Main Results}
We are now able to prove the main theorem on weighted BLO spaces. 
\begin{theorem}
 \label{thm:main} Let $p\geq 2$ and the wight $\omega:\setR^+\to\setR^+$ be almost increasing and satisfy \eqref{eq:omega}.
Let $u$ be a solution to \eqref{eq} on $I\times B$ and $g\in L^\infty(I,\setBMO_{\omega'}(B))$, with $\omega'\equiv\omega^{p-1}$, then $u\in L^\infty(I,\setBLO_{\omega}(B))$ locally. 
Moreover, there exists $c,\delta$ depending on $n,N,p,\gamma,c_1$ such that for every sub-intrinsic cylinder 
$Q_{R}^{\lambda_0}\subset I\times B$
\begin{align*} 
&\norm{u}_{L^\infty(I_{\lambda_0^{2-p}R^2/4},\setBLO_{\omega}(B_{\delta R/2}))}\\
&\qquad \leq \sup_{(t,x)\in  Q^{\lambda_0}_\frac{R}{2}}\sup\limits_{r\in(0,\frac{\delta R}2]}\frac1{\omega(r)}\bigg(\!\dashint_{B_{r}(x)}\Bigabs{\frac{u(t,y)-\ell_r(u)(t)}{r}}^2\dy\bigg)^\frac12\\
 &\qquad\leq c \normgw{R}^\frac1{p-1}
+\frac{c\lambda_0}{\omega(R)}.
\end{align*}
\end{theorem}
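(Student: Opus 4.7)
The plan is to reduce the pointwise-in-time spatial $\setBLO_\omega^2$ estimate for $u(t,\cdot)$ to the cylindrical intrinsic $\setBMO$-estimate for $V(\nabla u)$ provided by Proposition~\ref{pro:intrbmo}. The $p$-caloric comparison $h$ together with its pointwise regularity from Theorem~\ref{thm:decay} serves as the bridge between the two, while \eqref{eq:nervig} converts $L^2$-oscillations of $V(\nabla h)$ into $L^p$-oscillations of $\nabla h$.

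Fix $(t,x)\in Q^{\lambda_0}_{R/2}$ and $r\in(0,\delta R/2]$. Using Lemma~\ref{lem:cubes} and Lemma~\ref{lem:scal} place the sub-intrinsic cylinder $\Qr{}(z)$ with top $z=(t,x)$ inside the reference cylinder $Q_{R/2}^{\lambda_{R/2}}$, and let $h$ solve the $p$-caloric comparison problem \eqref{eq:hom} on $\Qr{}(z)$. The candidate affine approximation at time $t$ is
\[
  \ell(y):=h(t,x)+\nabla h(t,x)\cdot(y-x),
\]
for which a triangle inequality together with the mean-value theorem gives
\[
  \dashint_{B_r(x)}\Bigabs{\frac{u(t,\cdot)-\ell}{r}}^2\dy
    \leq 2\dashint_{B_r(x)}\Bigabs{\frac{u(t,\cdot)-h(t,\cdot)}{r}}^2\dy
    +2\,\osc{\nabla h(t,\cdot)}{B_r(x)}^2.
\]
The first summand is controlled directly by Lemma~\ref{eq:comparison}, which bounds it by $c\,\lambda_r^{2-p}\norm{g}_{L^\infty(I,\setBMO(B_r))}^{p'}$. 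For the Taylor remainder, \eqref{eq:nervig} yields $\osc{\nabla h(t,\cdot)}{B_r(x)}^p\leq c\,\osc{V(\nabla h)}{\Qr{}(z)}^2$; applying Theorem~\ref{thm:decay} on the smallest intrinsic enclosing cylinder (constructed as in Proposition~\ref{pro:comp}) and Lemma~\ref{eq:comparison} once more bounds this by $c\,\dashint_{\Qr{}(z)}\abs{V(\nabla u)-\mean{V(\nabla u)}}^2\dz+c\norm{g}_{L^\infty(I,\setBMO(B_r))}^{p'}$, and Proposition~\ref{pro:intrbmo} estimates the integral by $c\,\omega^p(r)\bigl(\normgw{R}^{p'}+\lambda_0^p\omega^{-p}(R)\bigr)$. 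Exploiting $\omega'\equiv\omega^{p-1}$ to rewrite $\norm{g}_{L^\infty(I,\setBMO(B_r))}^{p'}=\omega^p(r)\normgw{r}^{p'}$, dividing by $\omega^2(r)$, taking square roots, and finally passing to the infimum over $\ell\in P^1(B_r)$ and the supremum over admissible $(t,x)$ and $r$ yields the claim.

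The principal technical difficulty is the coherent bookkeeping of three different scalings: the intrinsic parabolic one encoded in $\lambda_r$ (required for Theorem~\ref{thm:decay} to apply to $\nabla h$), the $L^2$-to-$L^p$ conversion through $V$, and the weight $\omega$ carrying exponent $p-1$ on the data side. In particular, the factors $\lambda_r^{2-p}$ coming from Lemma~\ref{eq:comparison} must be absorbed against $\lambda_r^p$ from the sub-intrinsic bound of Lemma~\ref{lem:scal} and against $\omega^p(r)$ via \eqref{eq:omega}. Moreover, to legitimately apply Theorem~\ref{thm:decay} to $h$ one must handle the non-degenerate/degenerate dichotomy built into Proposition~\ref{pro:comp}, ensuring that the cylinder hosting the decay is intrinsic not only for $\nabla u$ but, up to comparison errors, also for $\nabla h$.
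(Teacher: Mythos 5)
Your high-level decomposition is the right one and mirrors the paper's: fix $(t,x)$, take $h$ the $p$-caloric comparison on a suitable cylinder, split $u-\ell$ into $u-h$ (controlled by Lemma~\ref{eq:comparison}) and a Taylor remainder involving $\osc{\nabla h}{B_r}$, pass from $\nabla h$ to $V(\nabla h)$ by \eqref{eq:nervig}, and feed the latter into the intrinsic estimate of Proposition~\ref{pro:intrbmo}. But there is a genuine gap where the argument actually bites.

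The estimate from Lemma~\ref{eq:comparison} gives
$\dashint_{B_r}\bigabs{\tfrac{u-h}{r}}^2 \leq c\,\lambda_r^{2-p}\normg^{p'}$,
and since $p\geq 2$ the prefactor $\lambda_r^{2-p}$ blows up when $\lambda_r\to 0$. There is no a priori lower bound on $\lambda_r$ (the cylinders of Lemma~\ref{lem:scal} are only \emph{sub}-intrinsic), so this term is uncontrolled precisely when $\lambda_r^p\lesssim\normg^{p'}$, i.e.\ in Case~1 of Proposition~\ref{pro:comp}. You mention that the $\lambda_r^{2-p}$ factor "must be absorbed against $\lambda_r^p$ from the sub-intrinsic bound and against $\omega^p(r)$," but neither of those mechanisms works here: sub-intrinsicity only yields an upper bound on $\lambda_r$, and \eqref{eq:omega} controls the variation of $\omega$, not a divergent negative power of $\lambda_r$. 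The paper resolves this by \emph{replacing} the cylinder in Case~1: it sets $\mu_r^p:=K\normg^{p'}\geq\lambda_r^p$, solves the comparison problem on $Q^{\mu_r}_r$ (which is sub-intrinsic since $\mu_r\geq\lambda_r$ and by the monotonicity \eqref{eq:mon}), and then $\mu_r^{2-p}\normg^{p'}=\mu_r^2/K$ is tame. In that case Theorem~\ref{thm:decay} (which needs a two-sided intrinsic bound) is not available for $h$ and is replaced by the weak Harnack sup bound of Theorem~\ref{thm:sup}. Your proposal flags the non-degenerate/degenerate dichotomy as a "technical difficulty" but does not execute it, and as written the Case~1 branch of your argument fails.

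A second, smaller point: applying Theorem~\ref{thm:decay} to $h$ requires the host cylinder to be genuinely intrinsic (two-sided) for $\nabla h$, and obtaining this from the merely sub-intrinsic cylinders of Lemma~\ref{lem:scal} goes through the stopping radius $\rho$ and the split $r\leq\rho$ versus $\rho<r<R/4$ in the paper. The phrase "on the smallest intrinsic enclosing cylinder" gestures at this, but the case $\rho<r$, where $\lambda_r$ decays geometrically by Lemma~\ref{lem:scal}~\ref{scal:6} and one uses \eqref{eq:decay2} directly rather than Proposition~\ref{pro:intrbmo}, is not in your proposal. Filling in these two case distinctions is not cosmetic bookkeeping; it is where the argument lives.
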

\begin{proof}
 We fix
$(t,x)\in Q^{\lambda_0}_{R/2}$ and construct $Q^{\lambda_{R/2}}_{R/2}(t,x)$ by Lemma~\ref{lem:cubes}. Then we define $Q^{\lambda_r}_r:=(t,t-\lambda_r^{2-p}r^2)\times B_r(x)\subset Q^{\lambda_0}_R$ for $\frac R2>r>0$ by Lemma~\ref{lem:scal} with respect to $Q^{\lambda_{R/2}}_{R/2}(t,x)$ for a convenient $\beta$. In the following all balls in space are centered in $(t,x)$. 
 Our aim is to estimate
\[
  N_r^\omega(u)(t,x):=\frac1{\omega^2(r)} \dashint_{ B_r(x)}\Bigabs{\frac{u(y,t)-\linear{u}{r}(t)}{r}}^2dy. 
\]
Here $\linear{u}{r}(t)$ is the best linear approximation of $u$ on $\set{t}\times B_r(x)$. We show the result for $\delta r\in (0,\frac{\delta R}2)$. The constant $\delta$ is fixed by Proposition~\ref{pro:comp}.
We will divide the proof in the two cases of Proposition~\ref{pro:comp}.

Case 1: $\lambda_r^p\leq K\normg^{p'}$.

Case 2: $\normg^{p'}\leq \frac1{K}\lambda_r^p$.

\noindent
If Case~1 holds, we find $\mean{\abs{\nabla u}^p}_{\Qr{}}\leq \lambda_r^p\leq \mu_r^p=:K\normg^{p'}$. As $\lambda_r\leq \mu_r$ we find that $ Q^{\mu_r}_r\subset \Qr{}$ and by \eqref{eq:mon} that $\mean{\abs{\nabla u}^p}_{Q^{\mu_r}_r}\leq \mu_r^p$. We take $h$ to be the solution of \eqref{eq:hom} on $Q^{\mu_r}_r$. Now Lemma~\ref{eq:comparison} gives
\begin{align}
\label{eq:mucomp}
  \mu_r^{p-2}\dashint_{\set{t}\times B_{r}}\Bigabs{\frac{u-h}{r}}^2dx+\dashint_{Q^{\mu_r}_r}\abs{V(\nabla u)-V(\nabla h)}^2\dz\leq c\mu_r^p.
\end{align}
We use that $\linear{u}{\delta r}$ is the best linear approximation of $u$ on $B_{\delta r}:=\set{t}\times B_{\delta r}(x)$ and \Poincare's inequality to gain
\begin{align}
\label{eq:case1}
\begin{aligned}
  N^\omega_{\delta r}(u) &\leq 
\frac{1}{\omega^2(\delta r)}\dashint_{B_{\delta r}}\Bigabs{\frac{u-\linear{h}{\delta r}}{r}}^2dx 
\leq \frac{c}{\omega^2(\delta r)}\dashint_{B_{\delta r}}\Bigabs{\frac{u-h}{r}}^2+\Bigabs{\frac{h-\linear{h}{\delta r}}{r}}^2dx\\
&\leq \frac{c(\delta)}{\omega^2(r)} \dashint_{B_{r}}\Bigabs{\frac{u-h}{r}}^2dx
+\frac{c}{\omega^2(\delta r)}\sup\limits_{ B_{\delta r}}\abs{\nabla h}^2 = I+II.
\end{aligned}
\end{align}
For $I$ we find by \eqref{eq:mucomp}
\[
 I\leq \frac{c(\delta)}{\omega^2(r)}\mu_r^2\leq c \normgw{r}^\frac{2}{p-1}.
\]
To estimate $II$ we find by \eqref{eq:mucomp}
\begin{align*}
\mean{\abs{\nabla h}^p}_{Q^{\mu_r}_r}\leq \mean{\abs{\nabla u}^p}_{Q^{\mu_r}_r}+\dashint_{Q^{\mu_r}_r}\abs{V(\nabla u)-V(\nabla h)}^2\dz\leq c\mu_r^p.
\end{align*}
 Now Theorem~\ref{thm:sup} implies
\[
\frac{c}{\omega^2(\delta r)} \sup\limits_{\set{t}\times B_{\delta r}(x)}\abs{\nabla h}^2\leq \frac{c}{\omega^2(\delta r)} \sup\limits_{Q^{\mu_r}_{\delta r}}\abs{\nabla h}^2\leq\frac{c}{\omega^2(r)}\mu_r^2\leq c\normgw{R}^\frac{2}{p-1}.
\]
This closes Case 1.

In the following Case 2 holds. Remember, that $\delta r\in (0,\delta \frac{R}{2})$. We start similar to \eqref{eq:case1}. we take $h$ to be the solution  of \eqref{eq:hom} on $\Qr{}$. Now Lemma~\ref{eq:comparison} gives
\begin{align}
\label{eq:lcomp}
  \lambda_r^{p-2}\dashint_{\set{t}\times B_{r}}\Bigabs{\frac{u-h}{r}}^2dx+\dashint_{\Qr{}}\abs{V(\nabla u)-V(\nabla h)}^2\dz\leq c\normg^{p'}.
\end{align}
Similar to Case ~1 we find
\begin{align*}
 N^\omega_{\delta r}(u) &\leq 
\frac{1}{\omega^2(\delta r)}\dashint_{B_{\delta r}}\Bigabs{\frac{u-\linear{h}{\delta r}}{r}}^2dx 
\leq \frac{c}{\omega^2(\delta r)}\dashint_{B_{\delta r}}\Bigabs{\frac{u-h}{r}}^2+\Bigabs{\frac{h-\linear{h}{\delta r}}{r}}^2dx\\
&\leq  \frac{c(\delta)}{\omega^2(r)}\dashint_{B_{r}}\Bigabs{\frac{u-h}{r}}^2 dx+ \frac{c}{\omega^2(\delta r)}\osc{\nabla h}{B_{\delta r}}^2= I+II.
\end{align*}
where we used the \Poincare's inequality. 
 We estimate $I$ by Lemma~\ref{eq:comparison}; as Case 2 holds we deduce from \eqref{eq:lcomp}
\begin{align*}
 \dashint_{B_{r}}\Bigabs{\frac{u-h}{r}}^2 dx\leq \lambda_r^{2-p}\normg^{p'}\leq \normg^\frac{2}{p-1},
\end{align*}
and consequently
\begin{align}
 \label{eq:I}
I\leq c\normgw{r}^\frac2{p-1}.
\end{align}
We estimate $II$ by using $p\geq2$ and Proposition~\ref{pro:comp}. As in the proof of Proposition~\ref{pro:intrbmo} we fix
 $\rho :=\sup\{a<\frac R2|Q^{\lambda_{a}}_a(t,x)\text{ is intrinsic}\}$. If $r\leq \rho$ Proposition~\ref{pro:comp} provides an ${r_1}\leq \rho$ such that 
\[
 \osc{V(\nabla h)}{B_{\delta r}}^2\leq c\Big(\frac{\delta r}{{r_1}}\Big)^\beta\!\!\!\!\dashint_{Q_{{r_1}}^{\lambda_{r_1}}}
\abs{V(\nabla u)-\mean{V(\nabla u)}_{Q_{{r_1}}^{\lambda_{{r_1}}}}}^2\dz+c\delta^\beta \normg^{p'}.
\]
This implies using \eqref{eq:nervig}
\begin{align*}
 II&=\frac{c}{\omega^2(\delta r)}\osc{\nabla h}{B_{\delta r}}^2 \leq  \frac{c}{\omega^2(\delta r)}\osc{V(\nabla h)}{B_{\delta r}}^\frac{4}{p} \\
&\leq c
\frac{1}{\omega^2(\delta r)}\bigg(\Big(\frac{\delta r}{{r_1}}\Big)^{\beta}\!\!\!\!\dashint_{Q_{{r_1}}^{\lambda_{r_1}}}
\abs{V(\nabla u)-\mean{V(\nabla u)}_{Q_{{r_1}}^{\lambda_{{r_1}}}}}^2\dz +\normg^{p'}\bigg)^\frac{2}{p}
\\
 &\leq c\Big(\frac{\delta r}{{r_1}}\Big)^{\frac{2}{p}(\beta-\gamma)}\bigg(\frac{1}{\omega^{p}({r_1})}\dashint_{Q_{{r_1}}^{\lambda_{r_1}}}
\abs{V(\nabla u)-\mean{V(\nabla u)}_{Q_{{r_1}}^{\lambda_{{r_1}}}}}^2\dz\bigg)^\frac2p\\
&\quad+c\normgw{r}^\frac{2}{p-1}
\end{align*}
as $\omega$ holds \eqref{eq:omega}. On this we can apply Proposition~\ref{pro:intrbmo} and find as $\gamma< \beta$
\begin{align}
\label{eq:II}
 II\leq c\normgw{R}^\frac{2}{p-1}+\frac{c\lambda_0^2}{\omega^2(R)}.
\end{align}
If $\rho<r<\frac R4$ we have by \ref{scal:6} of Lemma~\ref{lem:scal} and the construction of $Q^{\lambda_{R/2}}_{R/2}(t,x)$, that $\lambda_r\leq \Big(\frac{r}{R/2}\Big)^\beta \lambda_0$ and therefore we find by \eqref{eq:lcomp} and it's consequences \eqref{eq:hsup} and \eqref{eq:decay2}
\[
 II\leq \frac{c}{\omega^2(\delta r)}\osc{\nabla h}{B_{\delta r}}^2\leq \frac{c}{\omega^2(\delta r)}\lambda_r^2\leq  \frac{c}{\omega^2(R)}\Big(\frac{r}{R}\Big)^{\beta-\gamma}\lambda_0^2.
\]
Combining the last estimate with \eqref{eq:I} and \eqref{eq:II} closes case~2.
As all estimates are independent of $(t,x)\in Q^{\lambda_0}_\frac{R}{2}$, the result is proved.
\end{proof} 
\begin{proof}[Proof of Theorem~\ref{thm:m}]
 One fixes $\omega(r)\equiv 1$ and combines Lemma~\ref{lem:cubes} with Theorem~\ref{thm:main}. Then the result follows by the Campanato characterization of $\mathcal{C}^1(B_{R/2}(x))$.
\end{proof}

\begin{remark}
 \label{rem:triebel}
In \cite[Section: 1.7.2]{Tri92} we find that
BLO $=\mathcal{C}^1=F^1_{\infty,\infty}$, here $F^1_{\infty,\infty}$ is the Triebel-Lizorkin space. The space
$W^{1,\setBMO}=F^1_{\infty,2}$. We therefore find by our estimates that, if $g$ is in $ L^\infty(2I,\setBMO(2B))$, then we have
$u\in L^p(I,W^{1,p}(B))\cap L^\infty(I,BLO(B))=L^p(I,F^1_{p,2}(B))\cap L^\infty(I,F^1_{\infty,\infty}(B))$. By interpolation $u\in L^q(I,W^{1,r}(B))$ for every $1\leq q\leq\infty,$ $1\leq r<\infty$ (see \cite[Section: 1.6.2]{Tri92}); natural local estimates are available.   
\end{remark}

\begin{proposition}
 \label{cor:hoelder}
Let $\gamma p<\min\Bigset{\frac{\alpha}{1+\alpha\frac{p-2}2},\frac2{p-2}}$.
If $g\in L^\infty(I,C^{\gamma(p-1)}(B))$, then $\nabla u\in C^\gamma_{\text{par}}(I\times B)$. Moreover, for every sub-intrinsic cylinder $Q^{\lambda_0}_R$ we find
\begin{align*}
 \norm{\nabla u}_{C^\gamma_{\text{par}}(Q^{\lambda_0}_{R/4})}\leq c\Big(\frac{1}{R^{\gamma}}+\frac{1}{(K^{2-p}R^2)^\frac{\gamma}{2}}\Big),
\end{align*}
where $K=c \lambda_0+ cR^\gamma \norm{g}_{L^\infty(I,C^{\gamma(p-1)}(B_R))}^\frac1{p-1}$ and $c$ depends on $\gamma,n,p,K$.
\end{proposition}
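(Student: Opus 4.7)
The plan is to feed the power weight $\omega(r):=r^\gamma$ into the weighted estimates of Section~\ref{sec:bmo}, and then extract parabolic H\"older continuity of $\nabla u$ in the intrinsic parabolic metric. First I would verify admissibility of the weight. For $\omega(r)=r^\gamma$ one has $\omega(r)/\omega(\sigma r)=\sigma^{-\gamma}$, so condition \eqref{eq:omega} holds (with the ``$\gamma$'' there playing the role of $\gamma p$) precisely thanks to the standing hypothesis $\gamma p<\min\{\alpha/(1+\alpha(p-2)/2),\,2/(p-2)\}$. By the Campanato characterization of H\"older spaces, $L^\infty(I,C^{\gamma(p-1)}(B))\hookrightarrow L^\infty(I,\setBMO_{\omega'}(B))$ with $\omega'\equiv\omega^{p-1}=r^{\gamma(p-1)}$, and the weighted BMO norm is controlled by the $C^{\gamma(p-1)}$ norm of $g$ up to a dimensional constant.

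Next I would apply Theorem~\ref{thm:main}. Combined with the identification $\setBLO_{r^\gamma}\equiv\mathcal{C}^{1+\gamma}\equiv C^{1+\gamma}$ (valid since $\gamma\in(0,1)$), it immediately yields $u(t,\cdot)\in C^{1+\gamma}(B_{\delta R/2})$ uniformly in $t\in I_{\lambda_0^{2-p}R^2/4}$. Quantitatively the spatial $\gamma$-H\"older seminorm of $\nabla u(t,\cdot)$ is bounded by $c\,R^{-\gamma}K$, with $K:=c\lambda_0+cR^\gamma\|g\|_{L^\infty(I,C^{\gamma(p-1)}(B_R))}^{1/(p-1)}$. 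This produces the first term $R^{-\gamma}$ of the stated estimate and, simultaneously, the uniform bound $\|\nabla u\|_\infty\leq cK$ via the $\mathcal{C}^1$-embedding (which can also be read off directly from Theorem~\ref{thm:sup} applied to the $p$-caloric comparison function through Lemma~\ref{eq:comparison}).

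To recover the time component, I would invoke the intrinsic Campanato estimate of Proposition~\ref{pro:intrbmo} with the same $\omega$. Via \eqref{eq:nervig} it produces, on every sub-intrinsic $Q_r^{\lambda_r}(z)$ sitting inside $Q^{\lambda_0}_{R/2}$, a bound of the shape
\[
\dashint_{Q_r^{\lambda_r}(z)}|\nabla u-\mean{\nabla u}_{Q_r^{\lambda_r}(z)}|^p\,dz\leq c\,r^{\gamma p}K^p.
\]
Since $|\nabla u|\leq cK$, the sub-intrinsic parameter satisfies $\lambda_r\leq cK$ and, as $p\geq 2$, $\lambda_r^{2-p}\geq cK^{2-p}$. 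Hence the intrinsic cylinders $Q_r^{\lambda_r}(z)$ contain the standard parabolic cylinders $Q_{cK^{2-p}r^2,\,r}(z)$, and the above oscillation estimate transfers to these. A Campanato/Morrey-type embedding carried out in the intrinsic parabolic metric $d_K((t,x),(s,y)):=\max\{|x-y|,\,K^{(p-2)/2}|t-s|^{1/2}\}$ then upgrades the integral estimate to pointwise $\gamma$-H\"older continuity of $\nabla u$, producing the second term $(K^{2-p}R^2)^{-\gamma/2}$.

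The main technical obstacle I anticipate is the geometric bookkeeping in this last step: the Campanato condition naturally lives on intrinsic cylinders whose time extent depends on $\lambda_r$, and one must use the \emph{uniform} gradient bound $|\nabla u|\leq cK$ to align them with standard parabolic cylinders simultaneously in $r$ and in the base point, so that the space-time oscillation estimate upgrades to a genuine parabolic H\"older modulus with the explicit scalings matching the two terms of the claim. Everything else is a direct application of the machinery of Sections~\ref{sec:decay} and \ref{sec:bmo}.
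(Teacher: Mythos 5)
Your opening moves match the paper exactly: plug $\omega(r)=r^\gamma$ into Theorem~\ref{thm:main}, read off the spatial $C^{1+\gamma}$ estimate from the Campanato characterization, and obtain the uniform gradient bound $\max_{Q^{\lambda_0}_{R/2}}\abs{\nabla u}\leq K$. The gap is in the time direction, in the sentence ``the above oscillation estimate transfers to these.'' The intrinsic cylinders $Q^{\lambda_r}_r(z)$ do contain $Q_{K^{2-p}r^2,r}(z)$ when $\lambda_r\leq K$, but they can be \emph{much larger} in time, since nothing stops $\lambda_r\to 0$ as $r\to 0$. Passing the mean-oscillation bound from the bigger to the smaller cylinder via the best-constant property costs the volume ratio
\[
\frac{\abs{Q^{\lambda_r}_r}}{\abs{Q_{K^{2-p}r^2,r}}}=\Big(\frac{\lambda_r}{K}\Big)^{2-p},
\]
which is $\geq 1$ and in general unbounded. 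Even using the lower bound $\lambda_r\gtrsim (r/R)^\beta\lambda_0$ from Lemma~\ref{lem:scal}~\ref{scal:3}, this factor grows like $(R/r)^{\beta(p-2)}$; combined with the $(r/R)^{\gamma p}$ decay from Proposition~\ref{pro:intrbmo}, the net exponent is $\gamma p-\beta(p-2)$, and since the construction forces $\beta>\gamma p$, this is negative as soon as $p\geq 3$. So the transfer to fixed-aspect-ratio cylinders does not give decay in general, and the Campanato/Morrey embedding in the $d_K$ metric has no integral estimate to run on.

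The paper avoids this by never comparing intrinsic cylinders with standard ones. Instead it proves a one-dimensional Campanato estimate in time at each fixed $x$: it uses the continuity and monotonicity of $r\mapsto s(r)$ (Lemma~\ref{lem:scal}~\ref{scal:0}) to pick, for each $s\leq S=K^{2-p}R^2$, the radius $r(s)$ with $s=s(r(s))=\lambda_{r(s)}^{2-p}r(s)^2$, so that the time interval $(t-s,t)$ is \emph{exactly} the time extent of $Q^{\lambda_{r(s)}}_{r(s)}(t,x)$. Then the time oscillation at fixed $x$ is split into the spatial deviation (controlled by the already established $L^\infty C^{1,\gamma}$ bound) and the mean difference over the intrinsic cylinder (controlled by Proposition~\ref{pro:intrbmo} with \emph{no} volume ratio, because one averages over the full intrinsic cylinder). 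The inequality $\lambda_r\leq K$ is used only to convert $(r(s)/R)^{\gamma p}$ into $(s/S)^{\gamma p/2}$, which is a harmless direction. This scale matching is the missing idea in your argument; without it the ``obstacle'' you flag at the end is not merely bookkeeping but a real obstruction.
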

\begin{proof}
 We start by showing H\"older continuity in space. By Theorem~\ref{thm:main} and $\omega(r)=r^{\gamma}$ we gain by the Campanato characterization that
\begin{align}
\label{eq:hoel}
\norm{\nabla u}_{L^\infty(I_{\lambda_0^{2-p}R^2/4},C^{\gamma}( B_{R/2}))}\leq c\norm{g}_{L^\infty(I,C^{\gamma(p-1)}(B_R))}^\frac1{p-1}+\frac{c\lambda_0}{R^\gamma}. 
\end{align}
 This implies that $\nabla u$ is H\"older continuous in space. It implies also, that $\nabla u$ is bounded in $Q^{\lambda_0}_{R/2}$. Moreover, the previous implies 
\[\max_{Q^{\lambda_0}_{R/2}}\abs{\nabla u}\leq K<\infty\]
for $K=c \lambda_0+ cR^\gamma \norm{g}_{L^\infty(I,C^{\gamma(p-1)}(B_R))}^\frac1{p-1}$.

In the following we prove H\"older continuity in time. I.e.\ we show for  $(t,x)\in Q^{\lambda_0}_{R/4}$,
\begin{align}
\label{eq:timehoelder1}
 \bigg(\dashint^t_{t-s}&\abs{V(\nabla u)(\tau,x)-\mean{V(\nabla u)(\tau,x)}_{(t,t-s)}}^2\dtau\bigg)^\frac1p \leq  K\Big(\frac{s}{S}\Big)^\frac{\gamma}2.
\end{align}
for all $s\in (0,S)$, $S:=K^{p-2}R^2$. From this estimate the H\"older continuety in time follows by \eqref{eq:nervig} and the Campanato characterization of H\"older spaces.

In the following we prove \eqref{eq:timehoelder1}. We take $(t,x)\in Q^{\lambda_0}_{R/4}$, fix $S(R)=K^{2-p}R$ and take $Q^K_{R/4}(t,x)\subset Q^{\lambda_0}_{R/2}$ as starting cylinder.  Then for all $r<\frac R4$ we take $Q^{\lambda_r}_r(t,x)$ constructed by Lemma~\ref{lem:scal}. We have that $\lambda_r\leq K$, (as $\tilde{\lambda}\leq K$ by \eqref{eq:sleqS}). 
 Therefore Proposition~\ref{pro:intrbmo} provides for all $r\in (0,\frac{R}{4}]$
\begin{align}
\label{eq:timehoelder}
\begin{aligned}
\dashint^t_{t-s(r)}&\dashint_{B_r(x)}\abs{V(\nabla u)-\mean{V(\nabla u)}_{\Qr{}(z)}}^2\\
&\leq c\Big(\frac{r}{R}\Big)^{p\gamma}K^p
=c\Big(\frac{\lambda_r}{K}\Big)^\frac{p-2}2\Big(\frac{s(r)}{S(R)}\Big)^\frac{\gamma p}2 K^p
\leq c\Big(\frac{s(r)}{S(R)}\Big)^\frac{\gamma p}2  K^p,
\end{aligned}
\end{align}
as $\lambda_r\leq K$. Now we find by Lemma~\ref{lem:scal}, \ref{scal:0}, that $s(r)=\lambda_r^{2-p}r^2$ is continuous and $s(0)=0$ and $s(R)=S(R)$. Therefore we can choose an $r(s)$ for every $0<s\leq S$ such that $s=s(r)=\lambda_{r(s)}^{2-p}r^2(s)$.
We estimate for $x\in B_{R/4}$ and $s$ fixed
\begin{align*}
\dashint^t_{t-s}&\abs{V(\nabla u)(\tau,x)-\mean{V(\nabla u)(\tau,x)}_{(t,t-s)}}^2\dtau
\\
&\leq c\dashint^t_{t-s}\abs{V(\nabla u)(\tau,x)-\mean{V(\nabla u)}_{Q^{\lambda_{r(s)}}_{r(s)}}}^2\dtau\\
&\leq c\dashint^t_{t-s}\abs{V(\nabla u)(\tau,x)-\mean{V(\nabla u)(\tau)}_{B_{r(s)}(x)}}^2\dtau\\
&\qquad +c\dashint^t_{t-s}\abs{\mean{V(\nabla u)(\tau)}_{B_{r(s)}(x)}-\mean{V(\nabla u)}_{Q^{\lambda_{r(s)}}_{r(s)}}}^2\dtau=I+II.
\end{align*}
$I$ can be estimated by the $L^\infty(I_{R^2/4},C^{1,\gamma}(B_{\frac R2}))$ estimate 
\begin{align*}
&I\leq K^p\Big(\frac{r(s)}{R(S)}\Big)^{p\gamma} \leq K^p\Big(\frac{s}{S}\Big)^\frac{\gamma p}2.
\end{align*}
$II$ can be estimated by \eqref{eq:timehoelder}
\begin{align*}
II\leq \dashint^t_{t-s}\dashint_{B_{r(s)}}&\abs{V(\nabla u)-\mean{V(\nabla u)}_{Q^{\lambda_{r(s)}}_{r(s)}}}^2
\leq c\Big(\frac{s}{S}\Big)^\frac{\gamma p}2K^p,
\end{align*}
where we used that $Q^{\lambda_{r(s)}}_{r(s)}=(t,t-s(r))\times B_{r(s)}(x)$.
 This finishes the proof of \eqref{eq:timehoelder1}.
\end{proof}

\begin{remark}
\label{rem:cont}
 The last result can be weakened. As long as the modulus of continuity is strong enough to imply the boundedness of $\abs{\nabla u}$ we find the same natural estimates as in Proposition~\ref{cor:hoelder}. We expect that the sharp bound would be the Dini continuity. I.e.\ $f$ is Dini continuous on $B_R$ if it's modulus of continuity $\omega$ holds $\sum_{i=1}^\infty\omega(2^{-i}R)<\infty$. We conjecture that in this case $\setBLO_{\omega}\equiv C^{1,\omega}$. If this would be true, then the Dini result of \cite{KuuMin12} could be gained similar to Proposition~\ref{cor:hoelder} with a weaker condition on $g$, i.e.\ $g\in L^\infty(I,C_{\omega(p-1)}(B))$, but restricted to \eqref{eq} and $p\geq 2$.

If we follow the estimates of \cite[Corr. 5.4]{DieKapSch11}, we find directly, that $g\in L^\infty(I,\text{VMO}(B))$ implies that locally $u \in L^\infty(I,\text{VLO}(B))$.
\end{remark}
\begin{appendix}
\section{}
%
 For $Q_1\subset Q_2$ and $q\in[1,\infty)$ we find that
\begin{align}
 \label{eq:means}
\abs{\mean{f}_{Q_1}-\mean{f}_{Q_2}}
\leq 
\bigg(\dashint_{Q_1}\abs{f-\mean{f}_{Q_2}}^q\bigg)^\frac1q
\leq
\bigg(\frac{\abs{Q_2}}{\abs{Q_1}}\dashint_{Q_2}\abs{f-\mean{f}_{Q_2}}^q\bigg)^\frac1q.
\end{align}
This estimate can be iterated for $i=\set{0\dots k}$ and $Q_i\subset Q_{i-1}$ with $\frac{\abs{Q_{i-1}}}{\abs{Q_i}}\leq c$ 
\begin{align}
\label{eq:meanit}
 \abs{\mean{f}_{Q_k}-\mean{f}_{Q_0}}\leq \sum_{i=1}^k\abs{\mean{f}_{Q_i}-\mean{f}_{Q_{1-i}}}
\leq c\sum_{i=1}^k\bigg(\dashint_{Q_{i-1}}\abs{f-\mean{f}_{Q_{i-1}}}^q\bigg)^\frac1q.
\end{align}
\begin{lemma}
  \label{lem:osc}
Let $Q_1\subset Q$ be two Cylinders and $f\in L^q(Q)$ for $q\in[1,\infty)$. For $\epsilon\in (0,1)$ we find:

  If $ \abs{\mean{f}_{Q_1}}\leq \epsilon \mean{\abs{f}^q}_{Q}^\frac1q$, then
  \[
  \abs{\mean{f}_{Q_1}}\leq \epsilon\mean{\abs{f}^q}_{Q}^\frac1q\leq \frac{\epsilon}{1-\epsilon}\Big(1+\Big(\frac{\abs{Q}}{\abs{Q_1}}\Big)^{\frac 1q}\Big)\bigg(\dashint_{Q}\abs{f-\mean{f}_{Q}}^q\dx\bigg)^\frac1q.
  \]
\end{lemma}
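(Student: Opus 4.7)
The plan is to bound $\mean{\abs{f}^q}_Q^{1/q}$ above by the oscillation $\bigl(\dashint_Q\abs{f-\mean{f}_Q}^q\bigr)^{1/q}$ using the triangle inequality and the hypothesis on $\mean{f}_{Q_1}$, then multiply the resulting inequality by $\epsilon$. The key quantitative input will be the jump estimate \eqref{eq:means} between the two averages $\mean{f}_{Q_1}$ and $\mean{f}_Q$, which is why the factor $(\abs{Q}/\abs{Q_1})^{1/q}$ appears.

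First I would split, by the triangle inequality,
\[
\mean{\abs{f}^q}_Q^{1/q}\le \Bigl(\dashint_Q\abs{f-\mean{f}_Q}^q\dx\Bigr)^{1/q}+\abs{\mean{f}_Q}.
\]
Next I would control $\abs{\mean{f}_Q}$ by inserting $\mean{f}_{Q_1}$, writing
\[
\abs{\mean{f}_Q}\le \abs{\mean{f}_{Q_1}}+\abs{\mean{f}_Q-\mean{f}_{Q_1}},
\]
using the hypothesis $\abs{\mean{f}_{Q_1}}\le\epsilon\mean{\abs{f}^q}_Q^{1/q}$ on the first term and \eqref{eq:means} (which reads $\abs{\mean{f}_{Q_1}-\mean{f}_Q}\le(\abs{Q}/\abs{Q_1})^{1/q}\bigl(\dashint_Q\abs{f-\mean{f}_Q}^q\bigr)^{1/q}$) on the second.

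Combining these two estimates yields
\[
\mean{\abs{f}^q}_Q^{1/q}\le \Bigl(1+\Bigl(\frac{\abs{Q}}{\abs{Q_1}}\Bigr)^{1/q}\Bigr)\Bigl(\dashint_Q\abs{f-\mean{f}_Q}^q\dx\Bigr)^{1/q}+\epsilon\,\mean{\abs{f}^q}_Q^{1/q}.
\]
Finally, since $\epsilon<1$ I can absorb the last term on the left and divide by $1-\epsilon$; multiplying through by $\epsilon$ and reusing the hypothesis gives exactly the claimed two-sided chain. There is no genuine obstacle here; the only point to be mildly careful about is that the absorption step requires $\mean{\abs{f}^q}_Q^{1/q}<\infty$, which is ensured by the assumption $f\in L^q(Q)$.
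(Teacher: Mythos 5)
Your proof is correct and follows essentially the same route as the paper's: triangle inequality in $L^q$ to split off a constant, the estimate \eqref{eq:means} for the difference of averages to produce the factor $(\abs{Q}/\abs{Q_1})^{1/q}$, absorption of the $\epsilon\mean{\abs{f}^q}_Q^{1/q}$ term, and finally multiplication by $\epsilon$ together with the hypothesis. The only cosmetic difference is the order in which you insert the two averages (you add and subtract $\mean{f}_Q$ first and then split $\abs{\mean{f}_Q}$, while the paper inserts $\mean{f}_{Q_1}$ first); the resulting three terms are identical.
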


\begin{proof}
We find
\begin{align*}
    \mean{\abs{f}^q}_{Q}^\frac1q
&\leq \bigg(\dashint_{Q}\abs{f-\mean{f}_{Q_1}}^q\dx\bigg)^\frac1q+\abs{\mean{f}_{Q_1}}\\
&\leq \bigg(\dashint_{Q}\abs{f-\mean{f}_{Q}}^q\dx\bigg)^\frac1q +\abs{\mean{f}_{Q}-\mean{f}_{Q_1}}+\epsilon\mean{\abs{f}^q}_{Q}^\frac1q
  \end{align*}
 This implies that
  \begin{align*}
      \mean{\abs{f}^q}_{Q}^\frac1q&\leq 
\frac{1}{1-\epsilon}
\bigg(\dashint_{Q}\abs{f-\mean{f}_{Q}}^q\dx\bigg)^\frac1q+\frac1{1-\epsilon}\abs{\mean{f}_{Q_1}-\mean{f}_{Q}}
\end{align*}
We estimate the second integral by
\begin{align*} 
\abs{\mean{f}_{Q_1}-\mean{f}_{Q}}
&\leq \dashint_{Q_1}\abs{f-\mean{f}_{Q}}\dx
\leq 
\bigg(\dashint_{Q_1}\abs{f-\mean{f}_{Q}}^q\dx\bigg)^\frac1q\\
&\leq \bigg(\frac{\abs{Q_1}}{\abs{Q}}\dashint_{Q}\abs{f-\mean{f}_{Q}}^q\dx\bigg)^\frac1q.
  \end{align*} 
\end{proof}
 \begin{lemma}
  \label{lem:osc2}
Let $f\in L^q(Q_R)$ with $q\in[1,\infty)$. Suppose that $\omega:\setR^+\to\setR^+$ is increasing and holds the following Dini condition: $\sum_i^\infty\omega(2^{-i}R)\leq K$ (e.g. $\omega(r)=r^\gamma$). If
\[
 \bigg(\dashint_{\theta B}\abs{f-\mean{f}_{\theta B}}^q\bigg)^\frac1q
\leq
 c_1\omega(\theta)\bigg(\dashint_B\abs{f-\mean{f}_{B}}^q\bigg)^\frac1q,
\]
then
\[
 \osc{f}{\theta Q_\rho}\leq cK\omega(\theta)\bigg(\dashint_{Q_\rho}\abs{f-\mean{f}_{Q_R}}^q\bigg)^\frac1q,
\]
for all $\theta\in(0,\frac12)$, $\rho\leq R$ and $c$ depending only on $q,n, c_1$.
\end{lemma}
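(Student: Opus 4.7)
I would follow the classical Campanato--Dini iteration: the hypothesis provides a one-step decay of the $L^q$ mean oscillation at rate $\omega(\theta)$, which together with the Dini summability of $\omega$ yields a continuous representative of $f$ whose oscillation on $\theta Q_\rho$ is controlled by $\omega(\theta)$ times the $L^q$ mean oscillation of $f$ on $Q_\rho$.

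First I iterate the hypothesis with parameter $\theta=2^{-i}$ along a nested concentric chain $Q_i := 2^{-i} Q_\rho$, obtaining
\begin{equation*}
\bigg(\dashint_{Q_i}|f-\mean{f}_{Q_i}|^q\bigg)^{1/q} \leq c_1\, \omega(2^{-i}) \bigg(\dashint_{Q_\rho}|f-\mean{f}_{Q_\rho}|^q\bigg)^{1/q}.
\end{equation*}
Inserting this decay into the telescoping identity \eqref{eq:meanit} and summing via $\sum_i \omega(2^{-i}\rho) \leq K$ gives a uniform Cauchy bound on the sequence $\{\mean{f}_{Q_i}\}_i$; its limit is a well-defined pointwise value of $f$ at the center of $Q_\rho$. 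Running the same iteration on concentric chains about any Lebesgue point $x$ supplies a continuous representative $f^\ast$ of $f$ together with the pointwise control
\begin{equation*}
|f^\ast(x) - \mean{f}_B| \leq c\, K \bigg(\dashint_B|f-\mean{f}_B|^q\bigg)^{1/q}
\end{equation*}
for every ball $B$ centered at $x$ inside $Q_\rho$.

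To conclude, I would fix $x,y\in\theta Q_\rho$ and enclose both points into a reference set $\tilde B$ of radius comparable to $\theta\rho$ still sitting in $Q_\rho$. A triangle inequality, combined with the pointwise bound applied to concentric balls about $x$ and $y$ and with \eqref{eq:means} to compare their averages with $\mean{f}_{\tilde B}$, reduces $|f^\ast(x)-f^\ast(y)|$ to the $L^q$ mean oscillation of $f$ on a set of scale $\theta\rho$. A final application of the hypothesis with parameter comparable to $\theta$ starting from $Q_\rho$ produces the claimed factor $\omega(\theta)\bigl(\dashint_{Q_\rho}|f-\mean{f}_{Q_\rho}|^q\bigr)^{1/q}$; taking the supremum over $x,y\in\theta Q_\rho$ completes the proof.

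\textbf{Main obstacle.} The hypothesis only controls \emph{concentric} nested averages, while the pointwise estimate at a Lebesgue point $x\in\theta Q_\rho$ uses balls centered at $x$, and the final comparison of two points forces averages at distinct centers. I overcome this by enclosing every off-center ball into a slightly larger concentric reference ball; the resulting geometric constants depend only on $n$ and on $\theta<\tfrac12$ and are absorbed into $c$, while the monotonicity of $\omega$ keeps $\omega$ at the enlarged scale comparable to $\omega(\theta)$ for the weights of interest (e.g.\ $\omega(r)=r^\gamma$).
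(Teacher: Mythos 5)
Your proposal is correct and follows essentially the same route the paper takes: iterate the one-step $L^q$ decay along dyadic concentric chains, telescope the averages via \eqref{eq:meanit}, and sum the error using the Dini condition to obtain a pointwise bound, then finish by comparing two points. The paper's proof is slightly more streamlined — it anchors the chain directly at $Q_0(z)=\theta Q_\rho$ and shrinks concentrically about the arbitrary point $z\in\tfrac12\theta Q_\rho$, so the pointwise bound $\abs{f(z)-\mean{f}_{\theta Q_\rho}}\leq cK\omega(\theta)(\dashint_{Q_\rho}\abs{f-\mean{f}_{Q_\rho}}^q)^{1/q}$ comes out in one pass without the intermediate construction of a continuous representative and the subsequent "enclosure" step — but the underlying Campanato--Dini mechanism, the use of \eqref{eq:means}/\eqref{eq:meanit}, and the single extra application of the hypothesis at scale $\theta$ to produce the factor $\omega(\theta)$ are the same. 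The "obstacle" you flag is in fact harmless here, since the hypothesis applies to concentric shrinkings of an arbitrary sub-ball $B$ (not just balls concentric with $Q_\rho$), which is exactly how the paper uses it.
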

%

\begin{proof}
We only proof the first statement. 
 For $k\in\setN$ we define for $z\in \frac12\theta Q_\rho$ we define $Q_i(z):= 2^{-i}\frac12Q_{\theta \rho}(z)$ for $i=1,...,k$ and $Q_0(z)=\theta Q_\rho$. We estimate by \eqref{eq:meanit}
\[
\abs{\mean{f}_{Q_k(z)}-\mean{f}_{\theta Q_\rho}}
\leq
 \sum_{i=0}^{k-1}\bigg(\dashint_{Q_i(z)}\abs{f-\mean{f}_{Q_{i}(z)}}^q\bigg)^\frac1q
 \]
this can be estimated by assumption by and because $\omega$ is increasing
\begin{align*}
 \abs{\mean{f}_{Q_k(z)}-\mean{f}_{\theta Q_\rho}}&\leq c\sum_{i=1}^{k-1}\omega(2^{-i}\theta \rho)
\bigg(\dashint_{\theta Q_\rho}\abs{f-\mean{f}_{Q_{\theta \rho}}}^q\bigg)^\frac1q\\
&\leq
 cK\omega(\theta)\bigg(\dashint_{ Q_\rho}\abs{f-\mean{f}_{Q_{\rho}}}^q\bigg)^\frac1q;
\end{align*}
the constant is independent of $k$; this implies that
\[
 \abs{f(z)-\mean{f}_{\theta Q_\rho}}\leq cK\omega(\theta)\bigg(\dashint_{ Q_\rho}\abs{f-\mean{f}_{Q_{\rho}}}^q\bigg)^\frac1q.
\]
Consequently, we find for $z,w\in \frac12\theta Q_\rho$ we have 
\[
 \abs{f(z)-f(w)}\leq cK\omega(\theta)\bigg(\dashint_{ Q_\rho}\abs{f-\mean{f}_{Q_{\rho}}}^q\bigg)^\frac1q.
\]
\end{proof}
\end{appendix}

\bibliographystyle{abbrv} 

\end{document}